\newtheorem{theorem}{Theorem}
\theoremstyle{plain}
\newtheorem{axiom}{Axiom}
\newtheorem{claim}{Claim}
\newtheorem{conjecture}{Conjecture}
\newtheorem{corollary}{Corollary}
\newtheorem{definition}{Definition}
\newtheorem{example}{Example}
\newtheorem{exercise}{Exercise}
\newtheorem{lemma}{Lemma}
\newtheorem{proposition}{Proposition}
\newtheorem{remark}{Remark}
\numberwithin{equation}{section}
\numberwithin{theorem}{section}
\numberwithin{algorithm}{section}
\numberwithin{axiom}{section}
\numberwithin{case}{section}
\numberwithin{claim}{section}
\numberwithin{conclusion}{section}
\numberwithin{condition}{section}
\numberwithin{conjecture}{section}
\numberwithin{corollary}{section}
\numberwithin{criterion}{section}
\numberwithin{definition}{section}
\numberwithin{example}{section}
\numberwithin{exercise}{section}
\numberwithin{lemma}{section}
\numberwithin{notation}{section}
\numberwithin{problem}{section}
\numberwithin{proposition}{section}
\numberwithin{remark}{section}
\numberwithin{solution}{section}
\chardef\@x10\chardef\@xv60
\def\tcitime{
\def\@time{%
  \@minute\time\@hour\@minute\divide\@hour\@xv
  \ifnum\@hour<\@x 0\fi\the\@hour:%
  \multiply\@hour\@xv\advance\@minute-\@hour
  \ifnum\@minute<\@x 0\fi\the\@minute
  }}%
\def\QCTOpt[#1]#2{%
  \def\QCTOptB{#1}
  \def\QCTOptA{#2}
}
\def\QCTNOpt#1{%
  \def\QCTOptA{#1}
  \let\QCTOptB\empty
}
\def\Qct{%
  \@ifnextchar[{%
    \QCTOpt}{\QCTNOpt}
}
\def\QCBOpt[#1]#2{%
  \def\QCBOptB{#1}
  \def\QCBOptA{#2}
}
\def\QCBNOpt#1{%
  \def\QCBOptA{#1}
  \let\QCBOptB\empty
}
\def\Qcb{%
  \@ifnextchar[{%
    \QCBOpt}{\QCBNOpt}
}
\def\PrepCapArgs{%
  \ifx\QCBOptA\empty
    \ifx\QCTOptA\empty
      {}%
    \else
      \ifx\QCTOptB\empty
        {\QCTOptA}%
      \else
        [\QCTOptB]{\QCTOptA}%
      \fi
    \fi
  \else
    \ifx\QCBOptA\empty
      {}%
    \else
      \ifx\QCBOptB\empty
        {\QCBOptA}%
      \else
        [\QCBOptB]{\QCBOptA}%
      \fi
    \fi
  \fi
}
\def\GRAPHICSPS#1{%
 \ifcase\GRAPHICSTYPE
   \special{ps: #1}%
 \or
   \special{language "PS", include "#1"}%
 \fi
}%
\def\graffile#1#2#3#4{%
    \bgroup
    \leavevmode
    \@ifundefined{bbl@deactivate}{\def~{\string~}}{\activesoff}
    \raise -#4 \BOXTHEFRAME{%
        \hbox to #2{\raise #3\hbox to #2{\null #1\hfil}}}%
    \egroup
}%
\def\draftbox#1#2#3#4{%
 \leavevmode\raise -#4 \hbox{%
  \frame{\rlap{\protect\tiny #1}\hbox to #2%
   {\vrule height#3 width\z@ depth\z@\hfil}%
  }%
 }%
}%
\newif\ifwasdraft
\def\GRAPHIC#1#2#3#4#5{%
 \ifnum\draft=\@ne\draftbox{#2}{#3}{#4}{#5}%
  \else\graffile{#1}{#3}{#4}{#5}%
  \fi
 }%
\def\addtoLaTeXparams#1{%
    \edef\LaTeXparams{\LaTeXparams #1}}%
\newif\ifBoxFrame \BoxFramefalse
\newif\ifOverFrame \OverFramefalse
\newif\ifUnderFrame \UnderFramefalse
\def\BOXTHEFRAME#1{%
   \hbox{%
      \ifBoxFrame
         \frame{#1}%
      \else
         {#1}%
      \fi
   }%
}
\def\doFRAMEparams#1{\BoxFramefalse\OverFramefalse\UnderFramefalse\readFRAMEparams#1\end}%
\def\readFRAMEparams#1{%
 \ifx#1\end%
  \let\next=\relax
  \else
  \ifx#1i\dispkind=\z@\fi
  \ifx#1d\dispkind=\@ne\fi
  \ifx#1f\dispkind=\tw@\fi
  \ifx#1t\addtoLaTeXparams{t}\fi
  \ifx#1b\addtoLaTeXparams{b}\fi
  \ifx#1p\addtoLaTeXparams{p}\fi
  \ifx#1h\addtoLaTeXparams{h}\fi
  \ifx#1X\BoxFrametrue\fi
  \ifx#1O\OverFrametrue\fi
  \ifx#1U\UnderFrametrue\fi
  \ifx#1w
    \ifnum\draft=1\wasdrafttrue\else\wasdraftfalse\fi
    \draft=\@ne
  \fi
  \let\next=\readFRAMEparams
  \fi
 \next
 }%
\def\IFRAME#1#2#3#4#5#6{%
      \bgroup
      \let\QCTOptA\empty
      \let\QCTOptB\empty
      \let\QCBOptA\empty
      \let\QCBOptB\empty
      #6%
      \parindent=0pt%
      \leftskip=0pt
      \rightskip=0pt
      \setbox0 = \hbox{\QCBOptA}%
      \@tempdima = #1\relax
      \ifOverFrame
          \typeout{This is not implemented yet}%
          \show\HELP
      \else
         \ifdim\wd0>\@tempdima
            \advance\@tempdima by \@tempdima
            \ifdim\wd0 >\@tempdima
               \textwidth=\@tempdima
               \setbox1 =\vbox{%
                  \noindent\hbox to \@tempdima{\hfill\GRAPHIC{#5}{#4}{#1}{#2}{#3}\hfill}\\%
                  \noindent\hbox to \@tempdima{\parbox[b]{\@tempdima}{\QCBOptA}}%
               }%
               \wd1=\@tempdima
            \else
               \textwidth=\wd0
               \setbox1 =\vbox{%
                 \noindent\hbox to \wd0{\hfill\GRAPHIC{#5}{#4}{#1}{#2}{#3}\hfill}\\%
                 \noindent\hbox{\QCBOptA}%
               }%
               \wd1=\wd0
            \fi
         \else
            \ifdim\wd0>0pt
              \hsize=\@tempdima
              \setbox1 =\vbox{%
                \unskip\GRAPHIC{#5}{#4}{#1}{#2}{0pt}%
                \break
                \unskip\hbox to \@tempdima{\hfill \QCBOptA\hfill}%
              }%
              \wd1=\@tempdima
           \else
              \hsize=\@tempdima
              \setbox1 =\vbox{%
                \unskip\GRAPHIC{#5}{#4}{#1}{#2}{0pt}%
              }%
              \wd1=\@tempdima
           \fi
         \fi
         \@tempdimb=\ht1
         \advance\@tempdimb by \dp1
         \advance\@tempdimb by -#2%
         \advance\@tempdimb by #3%
         \leavevmode
         \raise -\@tempdimb \hbox{\box1}%
      \fi
      \egroup%
}%
\def\DFRAME#1#2#3#4#5{%
 \begin{center}
     \let\QCTOptA\empty
     \let\QCTOptB\empty
     \let\QCBOptA\empty
     \let\QCBOptB\empty
     \ifOverFrame 
        #5\QCTOptA\par
     \fi
     \GRAPHIC{#4}{#3}{#1}{#2}{\z@}
     \ifUnderFrame 
        \nobreak\par\nobreak#5\QCBOptA
     \fi
 \end{center}%
 }%
\def\FFRAME#1#2#3#4#5#6#7{%
 \begin{figure}[#1]%
  \let\QCTOptA\empty
  \let\QCTOptB\empty
  \let\QCBOptA\empty
  \let\QCBOptB\empty
  \ifOverFrame
    #4
    \ifx\QCTOptA\empty
    \else
      \ifx\QCTOptB\empty
        \caption{\QCTOptA}%
      \else
        \caption[\QCTOptB]{\QCTOptA}%
      \fi
    \fi
    \ifUnderFrame\else
      \label{#5}%
    \fi
  \else
    \UnderFrametrue%
  \fi
  \begin{center}\GRAPHIC{#7}{#6}{#2}{#3}{\z@}\end{center}%
  \ifUnderFrame
    #4
    \ifx\QCBOptA\empty
      \caption{}%
    \else
      \ifx\QCBOptB\empty
        \caption{\QCBOptA}%
      \else
        \caption[\QCBOptB]{\QCBOptA}%
      \fi
    \fi
    \label{#5}%
  \fi
  \end{figure}%
 }%
\def\makeactives{
  \catcode`\"=\active
  \catcode`\;=\active
  \catcode`\:=\active
  \catcode`\'=\active
  \catcode`\~=\active
}
   \gdef\activesoff{%
      \def"{\string"}
      \def;{\string;}
      \def:{\string:}
      \def'{\string'}
      \def~{\string~}
    }
\def\FRAME#1#2#3#4#5#6#7#8{%
 \bgroup
 \ifnum\draft=\@ne
   \wasdrafttrue
 \else
   \wasdraftfalse%
 \fi
 \def\LaTeXparams{}%
 \dispkind=\z@
 \def\LaTeXparams{}%
 \doFRAMEparams{#1}%
 \ifnum\dispkind=\z@\IFRAME{#2}{#3}{#4}{#7}{#8}{#5}\else
  \ifnum\dispkind=\@ne\DFRAME{#2}{#3}{#7}{#8}{#5}\else
   \ifnum\dispkind=\tw@
    \edef\@tempa{\noexpand\FFRAME{\LaTeXparams}}%
    \@tempa{#2}{#3}{#5}{#6}{#7}{#8}%
    \fi
   \fi
  \fi
  \ifwasdraft\draft=1\else\draft=0\fi{}%
  \egroup
 }%
\def\TEXUX#1{"texux"}
\def\limfunc#1{\mathop{\rm #1}}%
\def\func#1{\mathop{\rm #1}\nolimits}%
\long\def\QQQ#1#2{%
     \long\expandafter\def\csname#1\endcsname{#2}}%
\long\def\QQA#1#2{}%
\def\QTR#1#2{{\csname#1\endcsname #2}}
\def\EXPAND#1[#2]#3{}%
\def\NOEXPAND#1[#2]#3{}%
\def\LaTeXparent#1{}%
\def\ChildStyles#1{}%
\def\ChildDefaults#1{}%
\def\QTagDef#1#2#3{}%
  \providecommand{\UNICODE}[2][]{}
\def\QQfnmark#1{\footnotemark}
 \def\abstract{%
  \if@twocolumn
   \section*{Abstract (Not appropriate in this style!)}%
   \else \small 
   \begin{center}{\bf Abstract\vspace{-.5em}\vspace{\z@}}\end{center}%
   \quotation 
   \fi
  }%
   \def\registered{\relax\ifmmode{}\r@gistered
                    \else$\m@th\r@gistered$\fi}%
 \def\r@gistered{^{\ooalign
  {\hfil\raise.07ex\hbox{$\scriptstyle\rm\text{R}$}\hfil\crcr
  \mathhexbox20D}}}}{}%
\newdimen\theight
\def\Column{%
 \vadjust{\setbox\z@=\hbox{\scriptsize\quad\quad tcol}%
  \theight=\ht\z@\advance\theight by \dp\z@\advance\theight by \lineskip
  \kern -\theight \vbox to \theight{%
   \rightline{\rlap{\box\z@}}%
   \vss
   }%
  }%
 }%
\def\qed{%
 \ifhmode\unskip\nobreak\fi\ifmmode\ifinner\else\hskip5\p@\fi\fi
 \hbox{\hskip5\p@\vrule width4\p@ height6\p@ depth1.5\p@\hskip\p@}%
 }%
\def\miss{\hbox{\vrule height2\p@ width 2\p@ depth\z@}}%
\def\tcol#1{{\baselineskip=6\p@ \vcenter{#1}} \Column}  %
\def\newfmtname{LaTeX2e}
  \DeclareOldFontCommand{\rm}{\normalfont\rmfamily}{\mathrm}
  \DeclareOldFontCommand{\sf}{\normalfont\sffamily}{\mathsf}
  \DeclareOldFontCommand{\tt}{\normalfont\ttfamily}{\mathtt}
  \DeclareOldFontCommand{\bf}{\normalfont\bfseries}{\mathbf}
  \DeclareOldFontCommand{\it}{\normalfont\itshape}{\mathit}
  \DeclareOldFontCommand{\sl}{\normalfont\slshape}{\@nomath\sl}
  \DeclareOldFontCommand{\sc}{\normalfont\scshape}{\@nomath\sc}
\def\alpha{{\Greekmath 010B}}%
\def\beta{{\Greekmath 010C}}%
\def\gamma{{\Greekmath 010D}}%
\def\delta{{\Greekmath 010E}}%
\def\epsilon{{\Greekmath 010F}}%
\def\zeta{{\Greekmath 0110}}%
\def\eta{{\Greekmath 0111}}%
\def\theta{{\Greekmath 0112}}%
\def\iota{{\Greekmath 0113}}%
\def\kappa{{\Greekmath 0114}}%
\def\lambda{{\Greekmath 0115}}%
\def\mu{{\Greekmath 0116}}%
\def\nu{{\Greekmath 0117}}%
\def\xi{{\Greekmath 0118}}%
\def\pi{{\Greekmath 0119}}%
\def\rho{{\Greekmath 011A}}%
\def\sigma{{\Greekmath 011B}}%
\def\tau{{\Greekmath 011C}}%
\def\upsilon{{\Greekmath 011D}}%
\def\phi{{\Greekmath 011E}}%
\def\chi{{\Greekmath 011F}}%
\def\psi{{\Greekmath 0120}}%
\def\omega{{\Greekmath 0121}}%
\def\varepsilon{{\Greekmath 0122}}%
\def\vartheta{{\Greekmath 0123}}%
\def\varpi{{\Greekmath 0124}}%
\def\varrho{{\Greekmath 0125}}%
\def\varsigma{{\Greekmath 0126}}%
\def\varphi{{\Greekmath 0127}}%
\def\nabla{{\Greekmath 0272}}
\def\FindBoldGroup{%
   {\setbox0=\hbox{$\mathbf{x\global\edef\theboldgroup{\the\mathgroup}}$}}%
}
\def\Greekmath#1#2#3#4{%
    \if@compatibility
        \ifnum\mathgroup=\symbold
           \mathchoice{\mbox{\boldmath$\displaystyle\mathchar"#1#2#3#4$}}%
                      {\mbox{\boldmath$\textstyle\mathchar"#1#2#3#4$}}%
                      {\mbox{\boldmath$\scriptstyle\mathchar"#1#2#3#4$}}%
                      {\mbox{\boldmath$\scriptscriptstyle\mathchar"#1#2#3#4$}}%
        \else
           \mathchar"#1#2#3#4%
        \fi 
    \else 
        \FindBoldGroup
        \ifnum\mathgroup=\theboldgroup 
           \mathchoice{\mbox{\boldmath$\displaystyle\mathchar"#1#2#3#4$}}%
                      {\mbox{\boldmath$\textstyle\mathchar"#1#2#3#4$}}%
                      {\mbox{\boldmath$\scriptstyle\mathchar"#1#2#3#4$}}%
                      {\mbox{\boldmath$\scriptscriptstyle\mathchar"#1#2#3#4$}}%
        \else
           \mathchar"#1#2#3#4%
        \fi     	    
	  \fi}
\newif\ifGreekBold  \GreekBoldfalse
\let\SAVEPBF=\pbf
\def\pbf{\GreekBoldtrue\SAVEPBF}%
  \newcounter{equationnumber}  
  \def\mathletters{%
     \addtocounter{equation}{1}
     \edef\@currentlabel{\theequation}%
     \setcounter{equationnumber}{\c@equation}
     \setcounter{equation}{0}%
     \edef\theequation{\@currentlabel\noexpand\alph{equation}}%
  }
    \def\BibTeX{{\rm B\kern-.05em{\sc i\kern-.025em b}\kern-.08em
                 T\kern-.1667em\lower.7ex\hbox{E}\kern-.125emX}}}{}%
\def\AmS{{\protect\usefont{OMS}{cmsy}{m}{n}%
                A\kern-.1667em\lower.5ex\hbox{M}\kern-.125emS}}}{}%
\def\@@eqncr{\let\@tempa\relax
    \ifcase\@eqcnt \def\@tempa{& & &}\or \def\@tempa{& &}%
      \else \def\@tempa{&}\fi
     \@tempa
     \if@eqnsw
        \iftag@
           \@taggnum
        \else
           \@eqnnum\stepcounter{equation}%
        \fi
     \fi
     \global\tag@false
     \global\@eqnswtrue
     \global\@eqcnt\z@\cr}
\def\TCItag{\@ifnextchar*{\@TCItagstar}{\@TCItag}}
\def\@TCItag#1{%
    \global\tag@true
    \global\def\@taggnum{(#1)}}
\def\@TCItagstar*#1{%
    \global\tag@true
    \global\def\@taggnum{#1}}
\let\DOTSI\relax
\def\RIfM@{\relax\ifmmode}%
\def\FN@{\futurelet\next}%
\def\iint{\DOTSI\intno@\tw@\FN@\ints@}%
\def\iiint{\DOTSI\intno@\thr@@\FN@\ints@}%
\def\iiiint{\DOTSI\intno@4 \FN@\ints@}%
\def\idotsint{\DOTSI\intno@\z@\FN@\ints@}%
\def\ints@{\findlimits@\ints@@}%
\newif\iflimtoken@
\newif\iflimits@
\def\findlimits@{\limtoken@true\ifx\next\limits\limits@true
 \else\ifx\next\nolimits\limits@false\else
 \limtoken@false\ifx\ilimits@\nolimits\limits@false\else
 \ifinner\limits@false\else\limits@true\fi\fi\fi\fi}%
\def\multint@{\int\ifnum\intno@=\z@\intdots@                          
 \else\intkern@\fi                                                    
 \ifnum\intno@>\tw@\int\intkern@\fi                                   
 \ifnum\intno@>\thr@@\int\intkern@\fi                                 
 \int}
\def\multintlimits@{\intop\ifnum\intno@=\z@\intdots@\else\intkern@\fi
 \ifnum\intno@>\tw@\intop\intkern@\fi
 \ifnum\intno@>\thr@@\intop\intkern@\fi\intop}%
\def\intic@{%
    \mathchoice{\hskip.5em}{\hskip.4em}{\hskip.4em}{\hskip.4em}}%
\def\negintic@{\mathchoice
 {\hskip-.5em}{\hskip-.4em}{\hskip-.4em}{\hskip-.4em}}%
\def\ints@@{\iflimtoken@                                              
 \def\ints@@@{\iflimits@\negintic@
   \mathop{\intic@\multintlimits@}\limits                             
  \else\multint@\nolimits\fi                                          
  \eat@}
 \else                                                                
 \def\ints@@@{\iflimits@\negintic@
  \mathop{\intic@\multintlimits@}\limits\else
  \multint@\nolimits\fi}\fi\ints@@@}%
\def\intkern@{\mathchoice{\!\!\!}{\!\!}{\!\!}{\!\!}}%
\def\plaincdots@{\mathinner{\cdotp\cdotp\cdotp}}%
\def\intdots@{\mathchoice{\plaincdots@}%
 {{\cdotp}\mkern1.5mu{\cdotp}\mkern1.5mu{\cdotp}}%
 {{\cdotp}\mkern1mu{\cdotp}\mkern1mu{\cdotp}}%
 {{\cdotp}\mkern1mu{\cdotp}\mkern1mu{\cdotp}}}%
\def\RIfM@{\relax\protect\ifmmode}
\def\text{\RIfM@\expandafter\text@\else\expandafter\mbox\fi}
\let\nfss@text\text
\def\text@#1{\mathchoice
   {\textdef@\displaystyle\f@size{#1}}%
   {\textdef@\textstyle\tf@size{\firstchoice@false #1}}%
   {\textdef@\textstyle\sf@size{\firstchoice@false #1}}%
   {\textdef@\textstyle \ssf@size{\firstchoice@false #1}}%
   \glb@settings}
\def\textdef@#1#2#3{\hbox{{%
                    \everymath{#1}%
                    \let\f@size#2\selectfont
                    #3}}}
\newif\iffirstchoice@
\def\Let@{\relax\iffalse{\fi\let\\=\cr\iffalse}\fi}%
\def\vspace@{\def\vspace##1{\crcr\noalign{\vskip##1\relax}}}%
\def\multilimits@{\bgroup\vspace@\Let@
 \baselineskip\fontdimen10 \scriptfont\tw@
 \advance\baselineskip\fontdimen12 \scriptfont\tw@
 \lineskip\thr@@\fontdimen8 \scriptfont\thr@@
 \lineskiplimit\lineskip
 \vbox\bgroup\ialign\bgroup\hfil$\m@th\scriptstyle{##}$\hfil\crcr}%
\def\Sb{_\multilimits@}%
\def\endSb{\crcr\egroup\egroup\egroup}%
\def\Sp{^\multilimits@}%
\newdimen\ex@
\def\rightarrowfill@#1{$#1\m@th\mathord-\mkern-6mu\cleaders
 \hbox{$#1\mkern-2mu\mathord-\mkern-2mu$}\hfill
 \mkern-6mu\mathord\rightarrow$}%
\def\leftarrowfill@#1{$#1\m@th\mathord\leftarrow\mkern-6mu\cleaders
 \hbox{$#1\mkern-2mu\mathord-\mkern-2mu$}\hfill\mkern-6mu\mathord-$}%
\def\leftrightarrowfill@#1{$#1\m@th\mathord\leftarrow
\mkern-6mu\cleaders
 \hbox{$#1\mkern-2mu\mathord-\mkern-2mu$}\hfill
 \mkern-6mu\mathord\rightarrow$}%
\def\overrightarrow{\mathpalette\overrightarrow@}%
\def\overrightarrow@#1#2{\vbox{\ialign{##\crcr\rightarrowfill@#1\crcr
 \noalign{\kern-\ex@\nointerlineskip}$\m@th\hfil#1#2\hfil$\crcr}}}%
\def\overleftarrow{\mathpalette\overleftarrow@}%
\def\overleftarrow@#1#2{\vbox{\ialign{##\crcr\leftarrowfill@#1\crcr
 \noalign{\kern-\ex@\nointerlineskip}$\m@th\hfil#1#2\hfil$\crcr}}}%
\def\overleftrightarrow{\mathpalette\overleftrightarrow@}%
\def\overleftrightarrow@#1#2{\vbox{\ialign{##\crcr
   \leftrightarrowfill@#1\crcr
 \noalign{\kern-\ex@\nointerlineskip}$\m@th\hfil#1#2\hfil$\crcr}}}%
\def\underrightarrow{\mathpalette\underrightarrow@}%
\def\underrightarrow@#1#2{\vtop{\ialign{##\crcr$\m@th\hfil#1#2\hfil
  $\crcr\noalign{\nointerlineskip}\rightarrowfill@#1\crcr}}}%
\def\underleftarrow{\mathpalette\underleftarrow@}%
\def\underleftarrow@#1#2{\vtop{\ialign{##\crcr$\m@th\hfil#1#2\hfil
  $\crcr\noalign{\nointerlineskip}\leftarrowfill@#1\crcr}}}%
\def\underleftrightarrow{\mathpalette\underleftrightarrow@}%
\def\underleftrightarrow@#1#2{\vtop{\ialign{##\crcr$\m@th
  \hfil#1#2\hfil$\crcr
 \noalign{\nointerlineskip}\leftrightarrowfill@#1\crcr}}}%
\def\qopnamewl@#1{\mathop{\operator@font#1}\nlimits@}
\let\nlimits@\displaylimits
\def\setboxz@h{\setbox\z@\hbox}
\def\varlim@#1#2{\mathop{\vtop{\ialign{##\crcr
 \hfil$#1\m@th\operator@font lim$\hfil\crcr
 \noalign{\nointerlineskip}#2#1\crcr
 \noalign{\nointerlineskip\kern-\ex@}\crcr}}}}
 \def\rightarrowfill@#1{\m@th\setboxz@h{$#1-$}\ht\z@\z@
  $#1\copy\z@\mkern-6mu\cleaders
  \hbox{$#1\mkern-2mu\box\z@\mkern-2mu$}\hfill
  \mkern-6mu\mathord\rightarrow$}
\def\leftarrowfill@#1{\m@th\setboxz@h{$#1-$}\ht\z@\z@
  $#1\mathord\leftarrow\mkern-6mu\cleaders
  \hbox{$#1\mkern-2mu\copy\z@\mkern-2mu$}\hfill
  \mkern-6mu\box\z@$}
\def\projlim{\qopnamewl@{proj\,lim}}
\def\injlim{\qopnamewl@{inj\,lim}}
\def\varinjlim{\mathpalette\varlim@\rightarrowfill@}
\def\varprojlim{\mathpalette\varlim@\leftarrowfill@}
\def\varliminf{\mathpalette\varliminf@{}}
\def\varliminf@#1{\mathop{\underline{\vrule\@depth.2\ex@\@width\z@
   \hbox{$#1\m@th\operator@font lim$}}}}
\def\varlimsup{\mathpalette\varlimsup@{}}
\def\varlimsup@#1{\mathop{\overline
  {\hbox{$#1\m@th\operator@font lim$}}}}
\def\align{\@verbatim \frenchspacing\@vobeyspaces \@alignverbatim
You are using the "align" environment in a style in which it is not defined.}
\let\csname endalign*\endcsname =\endtrivlist
\def\alignat{\@verbatim \frenchspacing\@vobeyspaces \@alignatverbatim
You are using the "alignat" environment in a style in which it is not defined.}
\let\csname endalignat*\endcsname =\endtrivlist
\def\xalignat{\@verbatim \frenchspacing\@vobeyspaces \@xalignatverbatim
You are using the "xalignat" environment in a style in which it is not defined.}
\let\csname endxalignat*\endcsname =\endtrivlist
\def\gather{\@verbatim \frenchspacing\@vobeyspaces \@gatherverbatim
You are using the "gather" environment in a style in which it is not defined.}
\let\csname endgather*\endcsname =\endtrivlist
\def\multiline{\@verbatim \frenchspacing\@vobeyspaces \@multilineverbatim
You are using the "multiline" environment in a style in which it is not defined.}
\let\csname endmultiline*\endcsname =\endtrivlist
\def\arrax{\@verbatim \frenchspacing\@vobeyspaces \@arraxverbatim
You are using a type of "array" construct that is only allowed in AmS-LaTeX.}
\def\tabulax{\@verbatim \frenchspacing\@vobeyspaces \@tabulaxverbatim
You are using a type of "tabular" construct that is only allowed in AmS-LaTeX.}
\let\csname endarrax*\endcsname =\endtrivlist
\let\csname endtabulax*\endcsname =\endtrivlist
 \def\endequation{%
     \ifmmode\ifinner 
      \iftag@
        \addtocounter{equation}{-1} 
        $\hfil
           \displaywidth\linewidth\@taggnum\egroup \endtrivlist
        \global\tag@false
        \global\@ignoretrue   
      \else
        $\hfil
           \displaywidth\linewidth\@eqnnum\egroup \endtrivlist
        \global\tag@false
        \global\@ignoretrue 
      \fi
     \else   
      \iftag@
        \addtocounter{equation}{-1} 
        \eqno \hbox{\@taggnum}
        \global\tag@false%
        $$\global\@ignoretrue
      \else
        \eqno \hbox{\@eqnnum}
        $$\global\@ignoretrue
      \fi
     \fi\fi
 } 
 \newif\iftag@ \tag@false
 \def\TCItag{\@ifnextchar*{\@TCItagstar}{\@TCItag}}
 \def\@TCItag#1{%
     \global\tag@true
     \global\def\@taggnum{(#1)}}
 \def\@TCItagstar*#1{%
     \global\tag@true
     \global\def\@taggnum{#1}}
     \def\tag{\@ifnextchar*{\@tagstar}{\@tag}}
     \def\@tag#1{%
         \global\tag@true
         \global\def\@taggnum{(#1)}}
     \def\@tagstar*#1{%
         \global\tag@true
         \global\def\@taggnum{#1}}
\begin{document}
\title[A perturbation approach for Paneitz energy]{A perturbation approach
for Paneitz energy on standard three sphere}
\date{}
\author{Fengbo Hang}
\address{Courant Institute, New York University, 251 Mercer Street, New York
NY 10012}
\email{fengbo@cims.nyu.edu}
\author{Paul C. Yang}
\address{Department of Mathematics, Princeton University, Fine Hall,
Washington Road, Princeton NJ 08544}
\email{yang@math.princeton.edu}

\begin{abstract}
We present another proof of the sharp inequality for Paneitz operator on the
standard three sphere, in the spirit of subcritical approximation for the
classical Yamabe problem. To solve the perturbed problem, we use a
symmetrization process which only works for extremal functions. This gives a
new example of symmetrization for higher order variational problems.
\end{abstract}

\maketitle

\section{Introduction\label{sec1}}

Fourth order Paneitz operator (\cite{Br, P}) gains interest due to its role
in the progress of four dimensional conformal geometry (see \cite{CGY, HY5}%
). On a three dimensional Riemannian manifold $\left( M,g\right) $, it is
given by%
\begin{equation}
P\varphi =\Delta ^{2}\varphi +4\func{div}\left( Rc\left( \nabla \varphi
,e_{i}\right) e_{i}\right) -\frac{5}{4}\func{div}\left( R\nabla \varphi
\right) -\frac{1}{2}Q\varphi .  \label{eq1.1}
\end{equation}%
Here%
\begin{equation}
Q=-\frac{1}{4}\Delta R-2\left\vert Rc\right\vert ^{2}+\frac{23}{32}R^{2},
\label{eq1.2}
\end{equation}%
$Rc$ is the Ricci curvature, $R$ is the scalar curvature and $%
e_{1},e_{2},e_{3}$ is a local orthonormal frame. The Paneitz operator
satisfies the transformational law%
\begin{equation}
P_{\rho ^{-4}g}\varphi =\rho ^{7}P_{g}\left( \rho \varphi \right)
\label{eq1.3}
\end{equation}%
for any smooth positive function $\rho $.

This is similar to the conformal Laplacian operator $L\varphi =-\frac{%
4\left( n-1\right) }{n-2}\Delta \varphi +R\varphi $ on a Riemannian manifold
with dimension $n\geq 3$, which satisfies%
\begin{equation}
L_{\rho ^{\frac{4}{n-2}}g}\varphi =\rho ^{-\frac{n+2}{n-2}}L_{g}\left( \rho
\varphi \right)  \label{eq1.4}
\end{equation}%
for any positive smooth function $\rho $.

On the standard sphere $S^{n}$,%
\begin{equation}
L=-\frac{4\left( n-1\right) }{n-2}\Delta +n\left( n-1\right)  \label{eq1.5}
\end{equation}%
is positive definite. Its associated quadratic form%
\begin{equation}
\int_{S^{n}}L\varphi \cdot \varphi d\mu =\int_{S^{n}}\left[ \frac{4\left(
n-1\right) }{n-2}\left\vert \nabla \varphi \right\vert ^{2}+n\left(
n-1\right) \varphi ^{2}\right] d\mu  \label{eq1.6}
\end{equation}%
satisfies the sharp Sobolev inequality (\cite[Chapter 5]{SY})%
\begin{equation}
\inf_{\varphi \in H^{1}\left( S^{n}\right) \backslash \left\{ 0\right\} }%
\frac{\int_{S^{n}}\left[ \frac{4\left( n-1\right) }{n-2}\left\vert \nabla
\varphi \right\vert ^{2}+n\left( n-1\right) \varphi ^{2}\right] d\mu }{%
\left\Vert \varphi \right\Vert _{L^{\frac{2n}{n-2}}}^{2}}=n\left( n-1\right)
\left\vert S^{n}\right\vert ^{\frac{2}{n}}.  \label{eq1.7}
\end{equation}%
Here $\left\vert S^{n}\right\vert $ is the volume of $S^{n}$. In another
word the the functional minimizes at $\varphi =1$.

On standard three sphere $S^{3}$,%
\begin{equation}
P\varphi =\Delta ^{2}\varphi +\frac{1}{2}\Delta \varphi -\frac{15}{16}\varphi
\label{eq1.8}
\end{equation}%
and it has a negative first eigenvalue $-\frac{15}{16}$. Denote%
\begin{eqnarray}
E\left( \varphi \right) &=&\int_{S^{3}}P\varphi \cdot \varphi d\mu
\label{eq1.9} \\
&=&\int_{S^{3}}\left[ \left( \Delta \varphi \right) ^{2}-\frac{1}{2}%
\left\vert \nabla \varphi \right\vert ^{2}-\frac{15}{16}\varphi ^{2}\right]
d\mu ,  \notag
\end{eqnarray}%
and we look at the Paneitz energy (see \cite[section 1]{HY1}) $\left\Vert
\varphi ^{-1}\right\Vert _{L^{6}}^{2}E\left( \varphi \right) $ for $\varphi
\in H^{2}\left( S^{3}\right) $ with $\varphi >0$. Note the condition $%
\varphi >0$ makes sense since $H^{2}\left( S^{3}\right) \subset C^{\frac{1}{2%
}}\left( S^{3}\right) $ by the Sobolev embedding theorem. It is not clear at
all whether Paneitz energy even has a lower bound. Nevertheless it is proven
in \cite{YZ} that

\begin{theorem}[\protect\cite{YZ}]
\label{thm1.1}For any $\varphi \in H^{2}\left( S^{3}\right) $ with $\varphi
>0$, we have%
\begin{equation}
\left\Vert \varphi ^{-1}\right\Vert _{L^{6}}^{2}\int_{S^{3}}\left[ \left(
\Delta \varphi \right) ^{2}-\frac{1}{2}\left\vert \nabla \varphi \right\vert
^{2}-\frac{15}{16}\varphi ^{2}\right] d\mu \geq -\frac{15}{16}\left\vert
S^{3}\right\vert ^{\frac{4}{3}}.  \label{eq1.10}
\end{equation}%
In another word, the Paneitz energy minimizes at $\varphi =1$.
\end{theorem}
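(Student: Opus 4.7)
The plan is to adapt the subcritical approximation strategy familiar from the Yamabe problem. Note first that the functional $\|\varphi^{-1}\|_{L^6}^{2}E(\varphi)$ is conformally invariant on $S^3$: the transformation law (1.3) shows that $E$ is preserved under $g\mapsto \rho^{-4}g$, $\varphi\mapsto\rho\varphi$, while a direct computation shows the factor $\|\varphi^{-1}\|_{L^6}^{2}$ is also preserved, so the exponent $6$ plays the role of the critical Sobolev exponent. To break this invariance and restore compactness I would consider, for $p<6$, the perturbed minimization problem
\begin{equation*}
\Lambda_p := \inf\bigl\{\|\varphi^{-1}\|_{L^p}^{2}\,E(\varphi) : \varphi\in H^2(S^3),\ \varphi>0\bigr\},
\end{equation*}
which is still scale-invariant under $\varphi\mapsto c\varphi$ and can be normalized by $\|\varphi^{-1}\|_{L^p}=1$.

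The first step is to show $\Lambda_p>-\infty$ and is attained by a smooth positive minimizer $\varphi_p$. The Paneitz form $E$ on $S^3$ has exactly one negative eigendirection (spanned by the constants), with a spectral gap on the $L^2$-orthogonal complement: writing $\varphi=a+\psi$ with $\int_{S^3}\psi\,d\mu=0$ gives
\begin{equation*}
E(\varphi)=-\tfrac{15}{16}|S^3|\,a^2+E(\psi),\qquad E(\psi)\ge c\,\|\psi\|_{H^2}^2 \text{ for some } c>0.
\end{equation*}
Combined with the H\"older-type bound $\|\varphi\|_{L^1}\|\varphi^{-1}\|_{L^p}\ge |S^3|^{1+1/p}$ (which forces $a\ge |S^3|^{1/p}$ on the constraint), the compactness of $H^2(S^3)\hookrightarrow C^{1/2}(S^3)$ allows a direct-method argument yielding a smooth minimizer $\varphi_p>0$ satisfying the Euler--Lagrange equation $P\varphi_p = \Lambda_p\,\varphi_p^{-p-1}$. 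The decisive step is then to prove that every such minimizer must be constant. This is where the paper's symmetrization technique, valid for extremals of the Euler--Lagrange equation but not for general admissible functions, enters: applied to $\varphi_p$, it should reduce $\varphi_p$ to an axially symmetric function on $S^3$, after which an ODE or moving-plane analysis for the fourth-order operator $P$ excludes nonconstant solutions so long as $p<6$. Consequently
\begin{equation*}
\Lambda_p = \|1\|_{L^p}^{2}\,E(1) = -\tfrac{15}{16}\,|S^3|^{1+\tfrac{2}{p}}.
\end{equation*}

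Finally, to conclude, I would pass to the limit. Any fixed $\varphi\in H^2(S^3)$ with $\varphi>0$ satisfies $\inf_{S^3}\varphi>0$ by the continuous Sobolev embedding $H^2(S^3)\hookrightarrow C^{1/2}(S^3)$, so $\varphi^{-1}\in L^\infty(S^3)$ and $\|\varphi^{-1}\|_{L^p}\to\|\varphi^{-1}\|_{L^6}$ by dominated convergence as $p\to 6^-$. The subcritical lower bound $\|\varphi^{-1}\|_{L^p}^{2}E(\varphi)\ge\Lambda_p$ then passes to the limit and yields (1.10). The main obstacle is clearly the rigidity step in the middle paragraph: Schwarz rearrangement fails for the fourth-order quadratic form $\int_{S^3}[(\Delta\varphi)^2-\tfrac12|\nabla\varphi|^2]\,d\mu$, so one cannot symmetrize an arbitrary competitor. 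The paper's novelty — a symmetrization tailored to the Euler--Lagrange equation, together with the companion rigidity argument for the subcritical problem — is therefore the genuinely hard technical content of the proof.
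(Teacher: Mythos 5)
Your overall architecture (perturb, find a minimizer, show the minimizer is symmetric by a device that works only at minimizers, conclude rigidity, pass to the limit) mirrors the paper, but your choice of perturbation --- subcritical exponent $p<6$ with the unperturbed operator $P$ --- creates a genuine gap precisely at the existence step, and this is the reason the paper perturbs the \emph{operator} instead, replacing $E$ by $E_{\varepsilon }(\varphi )=E(\varphi )+\varepsilon \left\Vert \varphi \right\Vert _{L^{2}}^{2}$ while keeping the critical exponent $6$. Your coercivity sketch does not work: on the constraint $\left\Vert \varphi ^{-1}\right\Vert _{L^{p}}=1$ the H\"{o}lder bound forces the mean $a$ to be \emph{large} ($a\geq \left\vert S^{3}\right\vert ^{1/p}$), which makes the negative term $-\frac{15}{16}\left\vert S^{3}\right\vert a^{2}$ worse rather than controlling it, so neither $\Lambda _{p}>-\infty $ nor attainment follows from the spectral gap alone; whether $E(\psi )\geq c\left\Vert \psi \right\Vert _{H^{2}}^{2}$ can beat $\frac{15}{16}\left\vert S^{3}\right\vert a^{2}$ when $\varphi $ dips toward $0$ in a small set is exactly the sharp-constant issue the theorem is about. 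Worse, a minimizing sequence can degenerate to a function vanishing at a point; for the unperturbed $E$ condition NN only gives $E\geq 0$ there, and equality is actually realized by the Green's function $\left\Vert x_{0}-\cdot \right\Vert /8\pi $, so no contradiction is available. The paper's $\varepsilon $-term upgrades condition NN to the strict condition P ($\varphi (p)=0\Rightarrow E_{\varepsilon }(\varphi )>0$), and this single mechanism delivers both finiteness of $s_{\varepsilon }$ and a smooth positive minimizer (Lemma \ref{lem2.1}); your exponent perturbation has no analogous mechanism, and you would need a separate concentration/quantitative-NN analysis that you do not supply.

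The second weak point is that the heart of the argument is asserted rather than proved: you defer to ``the paper's symmetrization technique'' (circular for a blind proof) and then to ``an ODE or moving-plane analysis,'' but moving planes is not available off the shelf for a fourth-order equation with negative exponent. What the paper actually does is rewrite the Euler--Lagrange equation through the Green's function of $P+\varepsilon $, prove the kernel $k_{\varepsilon }$ is strictly monotone (Lemma \ref{lem4.1}), run the Baernstein--Taylor rearrangement inequality together with an equality analysis (Lemma \ref{lem3.3}) in the chain (\ref{eq4.28})--(\ref{eq4.32}) --- valid only because $u$ is a minimizer --- and then conclude constancy from a Kazdan--Warner identity (Corollary \ref{cor5.1}) using radial monotonicity. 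It is worth noting that these two steps would in fact adapt to your subcritical equation $P\varphi =-c\varphi ^{-p-1}$ (the kernel $-G_{P}$ is already positive and strictly monotone, and the Kazdan--Warner identity applies with weight $\chi =c\varphi ^{6-p}$), so the genuine obstruction to your route is the existence step above, not the rigidity step; your final limit $p\uparrow 6$ via dominated convergence is fine and parallels the paper's $\varepsilon \downarrow 0$.
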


All minimizers can be identified by the Liouville type theorem in \cite{CX}.

To continue, we observe that a classical way to derive (\ref{eq1.7}) is by
the subcritical approximation. If we try to prove the extremal problem (\ref%
{eq1.7}) indeed has a minimizer, the direct method in calculus of variations
meets the problem that the embedding $H^{1}\left( S^{n}\right) \subset L^{%
\frac{2n}{n-2}}\left( S^{n}\right) $ is not compact. On the other hand, for $%
2<q<\frac{2n}{n-2}$, we have a compact embedding $H^{1}\left( S^{n}\right)
\subset L^{q}\left( S^{n}\right) $, this together with standard argument
shows we can find a positive smooth function $u$ such that%
\begin{equation}
\inf_{\varphi \in H^{1}\left( S^{n}\right) \backslash \left\{ 0\right\} }%
\frac{\int_{S^{n}}\left[ \frac{4\left( n-1\right) }{n-2}\left\vert \nabla
\varphi \right\vert ^{2}+n\left( n-1\right) \varphi ^{2}\right] d\mu }{%
\left\Vert \varphi \right\Vert _{L^{q}}^{2}}  \label{eq1.11}
\end{equation}%
is achieved at $u$. Moreover after scaling, $u$ satisfies%
\begin{equation}
Lu=u^{q-1}\quad \text{on }S^{n}.  \label{eq1.12}
\end{equation}%
The method of moving plane (\cite{GNN}) tells us (\ref{eq1.12}) has only
constant solution. Hence%
\begin{equation}
\inf_{\varphi \in H^{1}\left( S^{n}\right) \backslash \left\{ 0\right\} }%
\frac{\int_{S^{n}}\left[ \frac{4\left( n-1\right) }{n-2}\left\vert \nabla
\varphi \right\vert ^{2}+n\left( n-1\right) \varphi ^{2}\right] d\mu }{%
\left\Vert \varphi \right\Vert _{L^{q}}^{2}}=n\left( n-1\right) \left\vert
S^{n}\right\vert ^{1-\frac{2}{q}}.  \label{eq1.13}
\end{equation}%
Let $q\uparrow \frac{2n}{n-2}$, we get (\ref{eq1.7}). We also note that this
subcritical approximation process can be used in the solution to Yamabe
problem (see \cite[Chapter 5]{SY}).

Now let us go back to (\ref{eq1.10}). There have been several different ways
to derive this inequality in \cite{H, HY1,YZ}, however none of them is in
the spirit of the subcritical approximation approach above for (\ref{eq1.7}%
). The main aim of this note is to provide such a method. As we will see
soon, the solution to the approximation problem provides a new way of doing
symmetrization for variational problems associated with higher order
equations.

To motivate the formulation of the perturbation problem, we recall some
observations in \cite{HY1,XY}. Let us consider the extremal problem%
\begin{eqnarray}
&&\inf_{\varphi \in H^{2}\left( M\right) ,\varphi >0}\left\Vert \varphi
^{-1}\right\Vert _{L^{6}\left( M\right) }^{2}E\left( \varphi \right) ;
\label{eq1.14} \\
E\left( \varphi \right) &=&\int_{M}P\varphi \cdot \varphi d\mu  \notag \\
&=&\int_{M}\left[ \left( \Delta \varphi \right) ^{2}-4Rc\left( \nabla
\varphi ,\nabla \varphi \right) +\frac{5}{4}R\left\vert \nabla \varphi
\right\vert ^{2}-\frac{1}{2}Q\varphi ^{2}\right] d\mu  \notag
\end{eqnarray}%
on a smooth compact three dimensional Riemannian manifold $\left( M,g\right) 
$. We say the Paneitz operator satisfies condition P (see \cite[Section 5]%
{HY1}) if for any $\varphi \in H^{2}\left( M\right) \backslash \left\{
0\right\} $, $\varphi \left( p\right) =0$ for some $p$ would imply $E\left(
\varphi \right) >0$. Under condition P, the extremal problem (\ref{eq1.14})
is achieved and the set of minimizers is compact under $C^{\infty }$
topology. Due to the noncompactness of Mobius transformation group on the
standard $S^{3}$, the Paneitz operator can not satisfy condition P in this
singular case. Nevertheless it satisfies the condition NN (see \cite[%
Sections 5 and 7]{HY1}): for any $\varphi \in H^{2}\left( M\right)
\backslash \left\{ 0\right\} $, $\varphi \left( p\right) =0$ for some $p$
would imply $E\left( \varphi \right) \geq 0$. The Green's function of $P$
can be written as (see \cite{HY1, HY6})%
\begin{equation}
G_{P}\left( x,y\right) =-\frac{\left\Vert x-y\right\Vert }{8\pi }
\label{eq1.15}
\end{equation}%
for $x,y\in S^{3}$, the unit sphere in $\mathbb{R}^{4}$. In particular for
any $x\in S^{3}$, $E\left( G_{P}\left( x,\cdot \right) \right) =0$. In fact,
the constant multiple of Green's function are the only function $\varphi \in
H^{2}\left( S^{3}\right) $ satisfying $\varphi \left( p\right) =0$ for some $%
p$ and $E\left( \varphi \right) =0$. To pass from condition NN to condition
P, we replace the Paneitz operator $P$ by $P+\varepsilon $ for $\varepsilon
>0$ small. Denote%
\begin{equation}
E_{\varepsilon }\left( \varphi \right) =\int_{S^{3}}\left( P\varphi
+\varepsilon \varphi \right) \varphi d\mu =E\left( \varphi \right)
+\varepsilon \left\Vert \varphi \right\Vert _{L^{2}}^{2}.  \label{eq1.16}
\end{equation}%
Then we study the variational problem%
\begin{equation}
\inf_{\varphi \in H^{2}\left( S^{3}\right) ,\varphi >0}\left\Vert \varphi
^{-1}\right\Vert _{L^{6}}^{2}E_{\varepsilon }\left( \varphi \right) .
\label{eq1.17}
\end{equation}%
It is straightforward to show the extremal problem has a minimizer (see
Lemma \ref{lem2.1}). If $u$ is a minimizer, then $u$ must be positive smooth
functions and after scaling it satisfies%
\begin{equation}
Pu+\varepsilon u=-u^{-7}\quad \text{on }S^{3}.  \label{eq1.18}
\end{equation}%
If we can prove the Liouville type theorem that every solution to (\ref%
{eq1.18}) must be constant function, then we know for any $\varphi \in
H^{2}\left( S^{3}\right) $, $\varphi >0$ and $\varepsilon >0$ small,%
\begin{equation}
\left\Vert \varphi ^{-1}\right\Vert _{L^{6}}^{2}\left( E\left( \varphi
\right) +\varepsilon \left\Vert \varphi \right\Vert _{L^{2}}^{2}\right) \geq
\left\vert S^{3}\right\vert ^{\frac{1}{3}}\left( E\left( 1\right)
+\varepsilon \left\vert S^{3}\right\vert \right) .  \label{eq1.19}
\end{equation}%
Letting $\varepsilon \downarrow 0$, we get (\ref{eq1.10}). But unfortunately
we can not verify such a Liouville theorem at this stage. There are many
evidence such a statement should be correct. We put it as a conjecture.

\begin{conjecture}
Let $P=\Delta ^{2}+\frac{1}{2}\Delta -\frac{15}{16}$ be the Paneitz operator
on the standard three sphere, $\varepsilon >0$ be a small positive number.
If $u$ is a positive smooth function on $S^{3}$ such that $Pu+\varepsilon
u=-u^{-7}$, then $u$ must be a constant function.
\end{conjecture}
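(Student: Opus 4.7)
\textit{Proof proposal.} The plan is to exploit the fact that $P$ and the nonlinearity $u^{-7}$ on $S^{3}$ are conformally covariant under (\ref{eq1.3}), while the perturbation $\varepsilon u$ is not; this asymmetry should yield a Kazdan--Warner type identity that, paired with compactness and local uniqueness at the constant, forces $u\equiv c_{\varepsilon}$.

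\textit{Step 1: the conformal identity.} Let $V$ be a conformal vector field on $S^{3}$ with M\"obius flow $\phi_{t}$ satisfying $\phi_{0}=\mathrm{id}$ and $\phi_{t}^{\ast}g=\rho_{t}^{-4}g$. For a positive smooth solution $u$ of (\ref{eq1.18}), set $v_{t}=\rho_{t}\cdot(u\circ\phi_{t})$. Naturality of $P$ together with (\ref{eq1.3}) gives
\[
Pv_{t}+\varepsilon \rho_{t}^{-8}v_{t}+v_{t}^{-7}=0.
\]
Differentiate at $t=0$, test the resulting linear equation against $u$, use self-adjointness of $P$ and equation (\ref{eq1.18}) itself, and integrate $\int u^{-7}(Vu)\,d\mu$ by parts against $\mathrm{div}(V)=-6\dot{\rho}_{0}$. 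After cancellations the identity collapses to $\varepsilon\int_{S^{3}}\dot{\rho}_{0}\,u^{2}\,d\mu=0$. As $V$ ranges over non-Killing conformal vector fields on $S^{3}$, the function $\dot{\rho}_{0}$ sweeps out the full $4$-dimensional space of first spherical harmonics, so for each $\varepsilon>0$ we obtain
\[
\int_{S^{3}}f\cdot u^{2}\,d\mu=0\qquad \text{for every first spherical harmonic } f \text{ on } S^{3}.
\]

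\textit{Step 2: compactness, rigidity, and local uniqueness.} Testing (\ref{eq1.18}) against $u$ gives $E(u)+\varepsilon\|u\|_{L^{2}}^{2}+\|u^{-1}\|_{L^{6}}^{6}=0$, and the sharp inequality (\ref{eq1.10}) then bounds $\|u^{-1}\|_{L^{6}}$ uniformly. The crux is to bootstrap this integral control to uniform $C^{\infty}$ control along any sequence $\varepsilon_{k}\downarrow 0$. Granting such compactness, a subsequence $u_{\varepsilon_{k}}\to u_{0}$ in $C^{\infty}$ with $u_{0}>0$ solving $Pu_{0}=-u_{0}^{-7}$; the Liouville theorem in \cite{CX} identifies $u_{0}=c\cdot|J_{\phi}|^{1/4}$ for some M\"obius $\phi$. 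Passing the Kazdan--Warner identity to the limit forces $\int f u_{0}^{2}\,d\mu=0$ for every first harmonic $f$, and a direct symmetry computation (the ``center of mass'' of $u_{0}^{2}$ on $S^{3}\subset\mathbb{R}^{4}$ points in the direction of the M\"obius parameter $a$ and vanishes iff $a=0$) forces $u_{0}$ to be the constant $c_{0}$. Finally the linearization $L_{\varepsilon}=P+8\varepsilon-\tfrac{105}{16}$ at $c_{\varepsilon}$ has eigenvalues $-\tfrac{15}{2}+8\varepsilon$, $8\varepsilon$, and $\geq\tfrac{105}{2}+8\varepsilon$ on spherical harmonics of degrees $0$, $1$, and $\geq 2$ respectively, so for $\varepsilon\in(0,\tfrac{15}{16})$ the identity and the PDE together imply $\|u_{\varepsilon}-c_{\varepsilon}\|_{L^{2}}\leq C\|u_{\varepsilon}-c_{\varepsilon}\|_{L^{2}}^{2}$ whenever $u_{\varepsilon}$ is close to $c_{\varepsilon}$ (the Kazdan--Warner identity improves the $8\varepsilon$ bound on the first-harmonic component to a quadratic bound). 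Since $u_{\varepsilon_{k}}\to c_{0}$ in $C^{\infty}$, this forces $u_{\varepsilon_{k}}=c_{\varepsilon_{k}}$ for large $k$, contradicting the assumption of non-constancy.

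\textit{Main obstacle.} The uniform $C^{\infty}$ compactness required in Step 2 is the decisive difficulty. The Paneitz operator $P$ on $S^{3}$ has a negative first eigenvalue, ruling out standard maximum-principle or $L^{p}$-iteration techniques, and the singular nonlinearity $u^{-7}$ requires preventing $\min u_{\varepsilon_{k}}\to 0$. A blow-up analysis after stereographic projection to $\mathbb{R}^{3}$, together with a classification of entire profiles of $\Delta^{2}v=-v^{-7}$ associated to the broken conformal symmetry, appears unavoidable; this delicate interplay is plausibly the reason the authors leave the statement as a conjecture.
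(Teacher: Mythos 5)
You should be aware that the paper does not prove this statement at all: the authors explicitly leave it as a conjecture, precisely because they could not establish the Liouville property for arbitrary solutions of (\ref{eq1.18}), and their actual route to (\ref{eq1.10}) sidesteps it by proving radial monotonicity only for \emph{minimizers} of the perturbed problem (the rearrangement argument of Sections \ref{sec3}--\ref{sec4}) and then invoking the Kazdan--Warner identity. Your Step 1 is sound and in fact reproduces exactly the identity the paper uses: writing the equation as $Pu=-(s_{\varepsilon}+\varepsilon u^{8})u^{-7}$ and applying Lemma \ref{lem5.1}/Corollary \ref{cor5.1} gives $\int_{S^{3}}\langle\nabla u,\nabla x_{i}\rangle\,u\,d\mu=0$, equivalently $\int_{S^{3}}x_{i}u^{2}d\mu=0$ for all first spherical harmonics, which is your identity. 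But in the paper this identity is decisive only because the minimizer is already known to be radial and monotone, so that $\langle\nabla u,\nabla x_{4}\rangle$ has a sign; for a general solution the vanishing of the ``center of mass'' of $u^{2}$ carries no such rigidity, and you correctly recognize that something else is needed.

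The genuine gap is the one you yourself flag: the uniform $C^{\infty}$ a priori estimate for solutions of $Pu_{\varepsilon}+\varepsilon u_{\varepsilon}=-u_{\varepsilon}^{-7}$ as $\varepsilon\downarrow 0$. This is not a technical loose end but the entire difficulty of the conjecture. The limit equation $Pu=-u^{-7}$ carries the noncompact M\"obius family of solutions, so nearby perturbed solutions could a priori degenerate ($\max u_{\varepsilon}\rightarrow\infty$ or $\min u_{\varepsilon}\rightarrow 0$); the negative first eigenvalue of $P$ rules out maximum-principle and standard iteration arguments; and the only uniform bound you extract, $\Vert u_{\varepsilon}^{-1}\Vert_{L^{6}}\leq C$, is far too weak to start a blow-up analysis (it does not even bound $\Vert u_{\varepsilon}\Vert_{H^{2}}$). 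Note also that this bound is obtained from (\ref{eq1.10}) itself, which is logically admissible only because (\ref{eq1.10}) has independent proofs, but it defeats the purpose for which the conjecture was formulated in this note, namely to \emph{derive} (\ref{eq1.10}) by letting $\varepsilon\downarrow 0$ in (\ref{eq1.19}). Granting compactness, the rest of your Step 2 (classification of the limit via \cite{CX}, the eigenvalue computation $-\frac{15}{2}+8\varepsilon,\ 8\varepsilon,\ \geq\frac{105}{2}+8\varepsilon$ for $L_{\varepsilon}=P+8\varepsilon-\frac{105}{16}$, and the use of the Kazdan--Warner identity to upgrade the first-harmonic component of $u_{\varepsilon}-c_{\varepsilon}$ to quadratic order, uniformly in $\varepsilon$) is a coherent and essentially correct scheme. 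But since the compactness step is left entirely open, the proposal is a program rather than a proof, and it does not resolve the conjecture any more than the paper does.
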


Without such a Liouville type result, we need another approach to show $1$
is a minimizer to the perturbed variational problem (\ref{eq1.17}). This
consists the main part of the note. Our approach is motivated from \cite%
{BT,HY4,HWY,R}. At first we would like to show that every minimizer must be
radial symmetric and decreasing with respect to some point on $S^{3}$. This
will be achieved by a symmetrization process which only works for minimizers
(in contrast, usual symmetrization process works for all test functions (see 
\cite{PS})). Next it follows from the usual Kazdan-Warner type identity that
any minimizer must be a constant function. It is worth pointing out the
method of symmetrization usually does not work well for higher order
variational problems or higher order Sobolev spaces. The point of our
approach lies in that it provides a new way to overcome the difficulty.

\section{The perturbation problem\label{sec2}}

For $\varepsilon >0$ small, we denote%
\begin{equation}
-s_{\varepsilon }=\inf_{\varphi \in H^{2}\left( S^{3}\right) ,\varphi
>0}\left\Vert \varphi ^{-1}\right\Vert _{L^{6}}^{2}E_{\varepsilon }\left(
\varphi \right) .  \label{eq2.1}
\end{equation}%
Here $E_{\varepsilon }\left( \varphi \right) $ is defined in (\ref{eq1.16}).
Since $E_{\varepsilon }\left( 1\right) <0$, by our choice of notation we see 
$s_{\varepsilon }>0$ and%
\begin{equation}
s_{\varepsilon }=\sup_{\varphi \in H^{2}\left( S^{3}\right) ,\varphi
>0}\left\Vert \varphi ^{-1}\right\Vert _{L^{6}}^{2}\left( -E_{\varepsilon
}\left( \varphi \right) \right) .  \label{eq2.2}
\end{equation}

\begin{lemma}
\label{lem2.1}For $\varepsilon >0$ small, there exists a $u\in C^{\infty
}\left( S^{3}\right) $ such that $u>0$ and $\left\Vert u^{-1}\right\Vert
_{L^{6}}^{2}E_{\varepsilon }\left( u\right) =-s_{\varepsilon }$. By scaling
we can assume $\left\Vert u^{-1}\right\Vert _{L^{6}}=1$, then%
\begin{equation}
Pu+\varepsilon u=-s_{\varepsilon }u^{-7}.  \label{eq2.3}
\end{equation}
\end{lemma}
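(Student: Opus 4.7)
My plan is to apply the direct method of the calculus of variations, using the condition NN for $P$ on $S^{3}$ (established in \cite{HY1}) as the essential tool for handling the noncompactness introduced by the $L^{6}$ norm of $\varphi^{-1}$. Since the functional $\|\varphi^{-1}\|_{L^{6}}^{2}E_{\varepsilon}(\varphi)$ is invariant under $\varphi\mapsto\lambda\varphi$, I choose a minimizing sequence $\{\varphi_{n}\}\subset H^{2}(S^{3})$ of positive functions normalized by $\|\varphi_{n}^{-1}\|_{L^{6}}=1$, so that $E_{\varepsilon}(\varphi_{n})\to -s_{\varepsilon}$.

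The main step is the $H^{2}$-boundedness of $\{\varphi_{n}\}$. Assuming the contrary, I rescale to $\hat\varphi_{n}=\varphi_{n}/\|\varphi_{n}\|_{H^{2}}$ and extract a subsequence with $\hat\varphi_{n}\rightharpoonup\hat\varphi$ in $H^{2}$ and $\hat\varphi_{n}\to\hat\varphi\geq 0$ in $C^{1/2}(S^{3})$ via Rellich. By scaling, $\|\hat\varphi_{n}^{-1}\|_{L^{6}}=\|\varphi_{n}\|_{H^{2}}\to\infty$ and $E_{\varepsilon}(\hat\varphi_{n})\to 0$. If $\hat\varphi\not\equiv 0$, the blow-up of $\|\hat\varphi_{n}^{-1}\|_{L^{6}}$ forces $\hat\varphi$ to vanish at some point, and condition NN then gives $E_{\varepsilon}(\hat\varphi)\geq\varepsilon\|\hat\varphi\|_{L^{2}}^{2}>0$, contradicting the weak lower semicontinuity bound $E_{\varepsilon}(\hat\varphi)\leq\liminf E_{\varepsilon}(\hat\varphi_{n})=0$. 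If instead $\hat\varphi\equiv 0$, then $\hat\varphi_{n}\to 0$ in $L^{2}$, so by integration by parts $\int|\nabla\hat\varphi_{n}|^{2}\leq\|\hat\varphi_{n}\|_{L^{2}}\|\Delta\hat\varphi_{n}\|_{L^{2}}\to 0$, and then $E_{\varepsilon}(\hat\varphi_{n})\to 0$ forces $\int(\Delta\hat\varphi_{n})^{2}\to 0$, contradicting $\|\hat\varphi_{n}\|_{H^{2}}=1$.

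With $\{\varphi_{n}\}$ bounded in $H^{2}$, I pass to a subsequence $\varphi_{n}\rightharpoonup u$ in $H^{2}$ with $\varphi_{n}\to u$ in $C^{1/2}$. The limit $u\geq 0$ cannot be identically zero, since otherwise $\|\varphi_{n}\|_{\infty}\to 0$ would force $\|\varphi_{n}^{-1}\|_{L^{6}}\to\infty$. If $u$ vanished at some point, condition NN would give $E_{\varepsilon}(u)>0$, contradicting the weak lower semicontinuity estimate $E_{\varepsilon}(u)\leq -s_{\varepsilon}<0$; hence $u>0$ on $S^{3}$, $\|u^{-1}\|_{L^{6}}=1$ by uniform convergence, and $E_{\varepsilon}(u)=-s_{\varepsilon}$. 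Computing the first variation at $u$ against arbitrary $v\in H^{2}(S^{3})$, and using $\|u^{-1}\|_{L^{6}}=1$ together with $E_{\varepsilon}(u)=-s_{\varepsilon}$, yields the weak equation $Pu+\varepsilon u=-s_{\varepsilon}u^{-7}$; since $u$ is continuous and strictly positive on the compact manifold $S^{3}$, it is bounded away from zero, so $u^{-7}$ is continuous and standard elliptic bootstrapping for $\Delta^{2}$ gives $u\in C^{\infty}(S^{3})$. The central difficulty is the coercivity step: the $L^{6}$ norm of $\varphi^{-1}$ is not a standard $H^{2}$-compatible quantity, and without condition NN one could not preclude minimizing sequences from concentrating onto profiles with zeros where the Paneitz energy $E$ becomes strongly negative.
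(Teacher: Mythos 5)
Your proof is correct and takes essentially the same route as the paper: the direct method, with condition NN forcing the weak $H^{2}$ limit of the minimizing sequence to be strictly positive, uniform convergence from the compact embedding $H^{2}\left( S^{3}\right) \subset C^{0}$, and then the Euler--Lagrange equation plus elliptic bootstrap. The only real difference is the normalization: the paper takes $\max_{S^{3}}u_{i}=1$, which makes the $H^{2}$ bound immediate from $E_{\varepsilon }\left( u_{i}\right) <0$, whereas your choice $\left\Vert \varphi _{n}^{-1}\right\Vert _{L^{6}}=1$ requires the additional (and correct) rescaling/blow-up argument for coercivity.
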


\begin{proof}
We can find a sequence $u_{i}\in H^{2}\left( S^{3}\right) $ such that $%
u_{i}>0$ and $\left\Vert u_{i}^{-1}\right\Vert _{L^{6}}^{2}E_{\varepsilon
}\left( u_{i}\right) \rightarrow -s_{\varepsilon }$. By scaling we can
assume $\max_{S^{3}}u_{i}=1$. Since $E_{\varepsilon }\left( u_{i}\right) <0$%
, we see $\left\Vert u_{i}\right\Vert _{H^{2}\left( S^{3}\right) }\leq c$.
Hence after passing to a subsequence there exists a $u\in H^{2}\left(
S^{3}\right) $ with $u_{i}\rightharpoonup u$ weakly in $H^{2}\left(
S^{3}\right) $. It follows that $u_{i}\rightarrow u$ uniformly on $S^{3}$
and hence $\max_{S^{3}}u=1$. Note that $E_{\varepsilon }\left( u\right) \leq
0$, hence $u$ must be strictly positive everywhere. Indeed if $u\left(
p\right) =0$ for some $p$, it follows from the fact $P$ satisfies condition
NN (see \cite[Sections 5 and 7]{HY1}) that $E\left( u\right) \geq 0$, hence $%
E_{\varepsilon }\left( u\right) \geq \varepsilon \left\Vert u\right\Vert
_{L^{2}}^{2}>0$, a contradiction.

Since $u$ is positive everywhere, we see $u_{i}^{-1}\rightarrow u^{-1}$
uniformly on $S^{3}$. Hence%
\begin{equation*}
-s_{\varepsilon }\leq \left\Vert u^{-1}\right\Vert
_{L^{6}}^{2}E_{\varepsilon }\left( u\right) \leq \lim \inf_{i\rightarrow
\infty }\left\Vert u_{i}^{-1}\right\Vert _{L^{6}}^{2}E_{\varepsilon }\left(
u_{i}\right) =-s_{\varepsilon }.
\end{equation*}%
In another word, $u$ is a minimizer.

After scaling we can assume $\left\Vert u^{-1}\right\Vert _{L^{6}}=1$, it is
clear that $u$ satisfies (\ref{eq2.3}) weakly. Standard elliptic theory
tells us $u$ must be smooth.
\end{proof}

\section{Basics for symmetrization on the sphere\label{sec3}}

We discuss symmetrization on $S^{n}$ for all $n$ since there is no
difference between different $n$'s. Let $S^{n}\subset \mathbb{R}^{n+1}$ be
the unit sphere. For $x,y\in \mathbb{R}^{n+1}$, $x\cdot y$ denotes the usual
dot product. Let $N=\left( 0,\cdots ,0,1\right) $ be the north pole and $%
S=\left( 0,\cdots ,0,-1\right) $ be the south pole. We write $\overline{%
\mathbb{R}}=\mathbb{R}\cup \left\{ \pm \infty \right\} $.

\begin{definition}
\label{def3.1}Fix a unit vector $u\in S^{n}$. We say a function $%
f:S^{n}\rightarrow \overline{\mathbb{R}}$ is radial symmetric and decreasing
with respect to $u$ if there exists a function $\phi :\left[ -1,1\right]
\rightarrow \overline{\mathbb{R}}$ such that $\phi $ is increasing and $%
f\left( x\right) =\phi \left( x\cdot u\right) $ for all $x\in S^{n}$.
Similarly we have the notion of radial symmetric and increasing with respect
to $u$.
\end{definition}

Let $f:S^{n}\rightarrow \overline{\mathbb{R}}$ be a measurable function,
then its symmetrization $f^{\ast }:S^{n}\rightarrow \overline{\mathbb{R}}$
is the unique function satisfying the following conditions:

\begin{itemize}
\item $f^{\ast }$ is radial symmetric and decreasing with respect to the
north pole $N$ and for any $t\in \mathbb{R}$,%
\begin{equation*}
\left\vert f>t\right\vert =\left\vert f^{\ast }>t\right\vert .
\end{equation*}%
Here for any set $E\subset S^{n}$, $\left\vert E\right\vert $ denotes the
measure of $E$.

\item $\left. f^{\ast }\right\vert _{S^{n}\backslash \left\{ S\right\} }$ is
lower semicontinuous and $f^{\ast }\left( S\right) =\limfunc{essinf}%
_{S^{n}}f $. Here $S$ is the south pole.
\end{itemize}

We also recall some notations from \cite[section 3]{HY5}. If $K=K\left(
x,y\right) $ is a function on $S^{n}\times S^{n}$, then we say $K$ is a
kernel (function) on $S^{n}$. If $K$ is a nice kernel on $S^{n}$ and $f$ is
a nice function on $S^{n}$, then we write%
\begin{equation}
\left( T_{K}f\right) \left( x\right) =\int_{S^{n}}K\left( x,y\right) f\left(
y\right) d\mu \left( y\right) .  \label{eq3.1}
\end{equation}%
If $K^{\prime }$ is another kernel on $S^{n}$, then we define their
convolution as%
\begin{equation}
\left( K\ast K^{\prime }\right) \left( x,z\right) =\int_{S^{n}}K\left(
x,y\right) K^{\prime }\left( y,z\right) d\mu \left( y\right) .  \label{eq3.2}
\end{equation}

A special class of kernel is pointed out in \cite[section 2]{BT}: if $k$ is
a function on the interval $\left[ -1,1\right] $, then we have a kernel $%
K_{k}$ on $S^{n}$ given by%
\begin{equation}
K_{k}\left( x,y\right) =k\left( x\cdot y\right) .  \label{eq3.3}
\end{equation}%
The orthogornal group $\limfunc{O}\left( n+1\right) $ acts naturally on $%
S^{n}\times S^{n}$ as $R\cdot \left( x,y\right) =\left( Rx,Ry\right) $ for
any $R\in \limfunc{O}\left( n+1\right) $, $x,y\in S^{n}$. The above class of
kernels are exactly those kernels invariant under the group action. So we
call this kind of kernel as invariant kernel. It is easy to see the
convolution of invariant kernels are still invariant. If $k_{1}$ and $k_{2}$
are nice functions on $\left[ -1,1\right] $, then we denote the function on $%
\left[ -1,1\right] $ associated with $K_{k_{1}}\ast K_{k_{2}}$ as $k_{1}\ast
k_{2}$. In another word, for any $t\in \left[ -1,1\right] $,%
\begin{equation}
\left( k_{1}\ast k_{2}\right) \left( t\right) =\int_{S^{n}}k_{1}\left(
x\cdot y\right) k_{2}\left( y\cdot z\right) d\mu \left( y\right)
\label{eq3.4}
\end{equation}%
for any $x,z\in S^{n}$ with $x\cdot z=t$. Note that $k_{1}\ast k_{2}$
depends on $n$, we will use $k_{1}\ast _{n}k_{2}$ when confusion could
happen.

\begin{lemma}
\label{lem3.1} If $k_{1}$ and $k_{2}$ are both increasing functions on $%
\left[ -1,1\right] $, then so is $k_{1}\ast k_{2}$.
\end{lemma}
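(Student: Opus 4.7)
The plan is to reduce the claim to the Riesz rearrangement inequality on the circle $S^{1}$ by slicing $S^{n}$ through the two-dimensional plane containing both base points. For concreteness I focus on $n\geq 2$; the case $n=1$ is the base case and is direct.

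Write $t=\cos \theta $ with $\theta \in \lbrack 0,\pi ]$. By the $\limfunc{O}(n+1)$-invariance built into (\ref{eq3.4}), I may choose representatives $x=N$ and $z=(\sin \theta ,0,\ldots ,0,\cos \theta )$, so
\begin{equation*}
(k_{1}\ast k_{2})(\cos \theta )=\int_{S^{n}}k_{1}(y_{n+1})\,k_{2}(\sin \theta \,y_{1}+\cos \theta \,y_{n+1})\,d\mu (y).
\end{equation*}
The integrand depends only on the pair $(y_{1},y_{n+1})$, which takes values in the closed unit disk $D\subset \mathbb{R}^{2}$. Integrating out the remaining coordinates $(y_{2},\ldots ,y_{n})$, which lie on a sphere of radius $\sqrt{1-y_{1}^{2}-y_{n+1}^{2}}$ in $\mathbb{R}^{n-1}$, produces a nonnegative Jacobian weight of the form $c_{n}(1-y_{1}^{2}-y_{n+1}^{2})^{(n-3)/2}$ on $D$.

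Next, pass to polar coordinates $y_{1}=\rho \sin \tau $, $y_{n+1}=\rho \cos \tau $ on $D$. The second argument of $k_{2}$ becomes $\rho \cos (\tau -\theta )$, yielding
\begin{equation*}
(k_{1}\ast k_{2})(\cos \theta )=c_{n}\int_{0}^{1}\rho (1-\rho ^{2})^{(n-3)/2}\left[ \int_{0}^{2\pi }k_{1}(\rho \cos \tau )\,k_{2}(\rho \cos (\tau -\theta ))\,d\tau \right] d\rho .
\end{equation*}
For each fixed $\rho \in \lbrack 0,1]$, the function $\tau \mapsto k_{j}(\rho \cos \tau )$ is $2\pi $-periodic, even, and symmetric-decreasing around $\tau =0$, since $k_{j}$ is increasing and $\cos \tau $ is decreasing on $[0,\pi ]$. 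Thus the bracketed expression is the circular convolution of two symmetric-decreasing functions on $S^{1}$.

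By the classical Riesz rearrangement inequality on $S^{1}$, such a convolution is itself symmetric-decreasing in $\theta $, so the bracket is a decreasing function of $\theta \in \lbrack 0,\pi ]$. Integrating against the nonnegative weight $\rho (1-\rho ^{2})^{(n-3)/2}$ preserves this monotonicity, and hence $(k_{1}\ast k_{2})(\cos \theta )$ is decreasing in $\theta $, which is equivalent to $(k_{1}\ast k_{2})(t)$ being increasing in $t$. The one nontrivial input is the Riesz inequality on $S^{1}$, which is the main technical obstacle, but it is handled by a short layer-cake argument reducing to the elementary fact that the arc-intersection length $|[-a,a]\cap \lbrack \theta -b,\theta +b]|$ is symmetric-decreasing in $\theta $.
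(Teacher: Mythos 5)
Your proposal is correct, but it takes a genuinely different route from the paper. The paper proves Lemma \ref{lem3.1} by differentiating $\left( k_{1}\ast k_{2}\right) \left( \cos \theta \right) $ in $\theta $, slicing $S^{n}$ by the level sets $\left\{ x\cdot p=\tau \right\} $ via the coarea formula, and then using a reflection across $\left( p^{\prime }\right) ^{\perp }$ on each slice (Claim \ref{claim3.1}); the monotonicity of $k_{1}$ enters only through the sign $u\cdot p^{\prime }=-\sin \theta \leq 0$, and no rearrangement input is needed. You instead push the spherical measure forward to the two-plane spanned by the base points, which correctly produces the density $c_{n}\left( 1-\rho ^{2}\right) ^{\left( n-3\right) /2}\rho \,d\rho \,d\tau $ for $n\geq 2$ (the exponent $-1/2$ at $n=2$ is still integrable), so that $\left( k_{1}\ast k_{2}\right) \left( \cos \theta \right) $ becomes a superposition over $\rho $ of circular convolutions of the symmetric-decreasing profiles $\tau \mapsto k_{j}\left( \rho \cos \tau \right) $; the lemma then reduces to the one-dimensional fact that such a convolution on $S^{1}$ is symmetric-decreasing, which, as you indicate, follows by layer-cake from the monotonicity of the arc overlap $\left\vert \left[ -a,a\right] \cap \left[ \theta -b,\theta +b\right] \right\vert $ in $\theta \in \left[ 0,\pi \right] $ (do check the wrap-around case $a+b>\pi $, where the overlap is eventually constant rather than strictly smaller, and note that since $k_{1},k_{2}$ need not be nonnegative you should first add constants, which only shifts the convolution by a constant in $\theta $). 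What each approach buys: the paper's argument needs a smoothing reduction and the coarea formula but no rearrangement theory at all; yours avoids differentiation entirely, applies directly to bounded measurable increasing $k_{1},k_{2}$, and isolates the elementary $S^{1}$ rearrangement fact — in particular it does not invoke Proposition \ref{prop3.1} or \cite{BT}, so no circularity arises — at the cost of the pushforward-density computation and of spelling out the $S^{1}$ step, which is the one place your write-up is only sketched.
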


\begin{proof}
Without losing of generality, we can assume both $k_{1}$ and $k_{2}$ are
smooth functions. Fix two perpendicular unit vectors $u$ and $v$. For $%
0<\theta <\pi $, we denote%
\begin{equation}
p\left( \theta \right) =u\cos \theta +v\sin \theta ,  \label{eq3.5}
\end{equation}%
then%
\begin{equation}
p^{\prime }\left( \theta \right) =-u\sin \theta +v\cos \theta .
\label{eq3.6}
\end{equation}%
By definition of $k_{1}\ast k_{2}$,%
\begin{equation}
\left( k_{1}\ast k_{2}\right) \left( \cos \theta \right)
=\int_{S^{n}}k_{1}\left( u\cdot x\right) k_{2}\left( x\cdot p\left( \theta
\right) \right) d\mathcal{H}^{n}\left( x\right) .  \label{eq3.7}
\end{equation}%
Here $\mathcal{H}^{n}$ denotes the Hausdorff measure. We use this notation
instead of $\mu $ to make the application of coarea formula easier.
Differentiate with respect to $\theta $, we get%
\begin{eqnarray}
&&-\sin \theta \left( k_{1}\ast k_{2}\right) ^{\prime }\left( \cos \theta
\right)  \label{eq3.8} \\
&=&\int_{S^{n}}k_{1}\left( u\cdot x\right) k_{2}^{\prime }\left( x\cdot
p\right) \left( x\cdot p^{\prime }\right) d\mathcal{H}^{n}\left( x\right) 
\notag \\
&=&\int_{-1}^{1}d\tau \frac{k_{2}^{\prime }\left( \tau \right) }{\sqrt{%
1-\tau ^{2}}}\int_{\substack{ x\in S^{n}  \\ x\cdot p=\tau }}k_{1}\left(
u\cdot x\right) \left( x\cdot p^{\prime }\right) d\mathcal{H}^{n-1}\left(
x\right) .  \notag
\end{eqnarray}%
We have applied the coarea formula in the second equality. Now Lemma \ref%
{lem3.1} follows from the following claim.

\begin{claim}
\label{claim3.1}%
\begin{equation}
\int_{\substack{ x\in S^{n}  \\ x\cdot p=\tau }}k_{1}\left( u\cdot x\right)
\left( x\cdot p^{\prime }\right) d\mathcal{H}^{n-1}\left( x\right) \leq 0.
\label{eq3.9}
\end{equation}
\end{claim}

Indeed, we have the reflection%
\begin{equation}
y=x-2\left( x\cdot p^{\prime }\right) p^{\prime }.  \label{eq3.10}
\end{equation}%
Note that%
\begin{eqnarray}
&&\int_{\substack{ x\in S^{n}  \\ x\cdot p=\tau }}k_{1}\left( u\cdot
x\right) \left( x\cdot p^{\prime }\right) d\mathcal{H}^{n-1}\left( x\right)
\label{eq3.11} \\
&=&\int_{\substack{ x\in S^{n}  \\ x\cdot p=\tau  \\ x\cdot p^{\prime }>0}}%
k_{1}\left( u\cdot x\right) \left( x\cdot p^{\prime }\right) d\mathcal{H}%
^{n-1}\left( x\right) +\int_{\substack{ x\in S^{n}  \\ x\cdot p=\tau  \\ %
x\cdot p^{\prime }<0}}k_{1}\left( u\cdot x\right) \left( x\cdot p^{\prime
}\right) d\mathcal{H}^{n-1}\left( x\right)  \notag \\
&=&\int_{\substack{ x\in S^{n}  \\ x\cdot p=\tau  \\ x\cdot p^{\prime }>0}}%
k_{1}\left( u\cdot x\right) \left( x\cdot p^{\prime }\right) d\mathcal{H}%
^{n-1}\left( x\right)  \notag \\
&&-\int_{\substack{ y\in S^{n}  \\ y\cdot p=\tau  \\ y\cdot p^{\prime }<0}}%
k_{1}\left( u\cdot \left( y-2\left( y\cdot p^{\prime }\right) p^{\prime
}\right) \right) \left( y\cdot p^{\prime }\right) d\mathcal{H}^{n-1}\left(
y\right)  \notag \\
&=&\int_{\substack{ x\in S^{n}  \\ x\cdot p=\tau  \\ x\cdot p^{\prime }>0}}%
\left( k_{1}\left( u\cdot x\right) -k_{1}\left( u\cdot \left( x-2\left(
x\cdot p^{\prime }\right) p^{\prime }\right) \right) \right) \left( x\cdot
p^{\prime }\right) d\mathcal{H}^{n-1}\left( x\right)  \notag \\
&\leq &0  \notag
\end{eqnarray}%
because $u\cdot p^{\prime }=-\sin \theta \leq 0$ and $k_{1}$ is increasing.
\end{proof}

For a function $k$ on $\left[ -1,1\right] $, we denote%
\begin{equation}
T_{k}=T_{K_{k}}.  \label{eq3.12}
\end{equation}%
In another word,%
\begin{equation}
\left( T_{k}f\right) \left( x\right) =\int_{S^{n}}k\left( x\cdot y\right)
f\left( y\right) d\mu \left( y\right) .  \label{eq3.13}
\end{equation}

\begin{itemize}
\item $T_{k}1=const$ i.e. $1$ is an eigenfunction of $T_{k}$.

\item If we denote $\lambda =T_{k}1$, then for any nice function $f$ on $%
S^{n}$,%
\begin{equation}
\int_{S^{n}}T_{k}f\left( x\right) d\mu \left( x\right) =\lambda
\int_{S^{n}}fd\mu .  \label{eq3.14}
\end{equation}

\item For $R\in \limfunc{O}\left( n+1\right) $, we let%
\begin{equation}
f_{R}\left( x\right) =f\left( Rx\right)  \label{eq3.15}
\end{equation}%
for all $x\in S^{n}$. Then%
\begin{equation}
T_{k}\left( f_{R}\right) =\left( T_{k}f\right) _{R}.  \label{eq3.16}
\end{equation}
\end{itemize}

As a consequence, we see if $u\in S^{n}$ and $f$ is radial symmetric with
respect to $u$, then so is $T_{k}f$.

\begin{lemma}
\label{lem3.2}Assume $k$ is increasing function on $\left[ -1,1\right] $, $%
u\in S^{n}$, $f:S^{n}\rightarrow \mathbb{R}$ is symmetric and decreasing
with respect to $u$, then so is $T_{k}f$.
\end{lemma}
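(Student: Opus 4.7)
The plan is to reduce Lemma 3.2 directly to Lemma 3.1 by recognizing $T_k f$ as an invariant-kernel convolution. Since $f$ is radial symmetric and decreasing with respect to $u$, by Definition~\ref{def3.1} there exists an increasing function $\phi: [-1,1] \to \mathbb{R}$ with $f(y) = \phi(y \cdot u)$ for every $y \in S^n$. In particular, $f$ itself may be viewed as the value $K_\phi(y,u)$ of the invariant kernel attached to $\phi$.

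With this identification in hand, I would rewrite
\[
(T_k f)(x) = \int_{S^n} k(x \cdot y)\, \phi(y \cdot u)\, d\mu(y) = \int_{S^n} K_k(x,y)\, K_\phi(y,u)\, d\mu(y) = (K_k \ast K_\phi)(x,u).
\]
Because both factors are invariant kernels, so is their convolution, and by the definition of $k \ast_n \phi$ in (\ref{eq3.4}) this last integral equals $(k \ast_n \phi)(x \cdot u)$. Thus
\[
(T_k f)(x) = \psi(x \cdot u), \qquad \psi := k \ast_n \phi.
\]

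Now I invoke Lemma~\ref{lem3.1}: since both $k$ and $\phi$ are increasing on $[-1,1]$, the function $\psi = k \ast_n \phi$ is also increasing on $[-1,1]$. By Definition~\ref{def3.1} this exactly says that $T_k f$ is radial symmetric and decreasing with respect to $u$, which is the desired conclusion.

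There is essentially no obstacle here; the statement is a clean structural corollary of Lemma~\ref{lem3.1}. The only subtle point to be careful about is the bookkeeping of the convention in Definition~\ref{def3.1}, where \emph{radial symmetric and decreasing} corresponds to the profile function $\phi$ being \emph{increasing} in the inner product $y \cdot u$ (so that values are largest at $u$ and smallest at $-u$). Once this convention is respected on both sides, Lemma~\ref{lem3.1} supplies exactly the monotonicity that is required. No rotation normalization is needed since the computation is already written in terms of the base point $u$; equivariance (\ref{eq3.16}) could alternatively be used to first reduce to the case $u = N$, but this is unnecessary.
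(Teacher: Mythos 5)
Your proof is correct and follows essentially the paper's route: the paper also writes $T_kf(p(\theta))=\int_{S^n}k(p\cdot x)\phi(x\cdot u)\,d\mathcal{H}^n(x)$, i.e.\ the same convolution $k\ast_n\phi$, and concludes that the profile is increasing by the argument of Lemma~\ref{lem3.1}. Your only (cosmetic) difference is that you cite the statement of Lemma~\ref{lem3.1} via the explicit identification $(T_kf)(x)=(k\ast_n\phi)(x\cdot u)$, whereas the paper rederives the display and appeals to the proof of Lemma~\ref{lem3.1}.
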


\begin{proof}
We assume $f\left( x\right) =\phi \left( x\cdot u\right) $. Then there
exists a function $\psi $ such that $T_{k}f\left( x\right) =\psi \left(
x\cdot u\right) $. Without losing of generality we can assume $k$ and $f$
are both smooth functions. Choose a unit vector $v$ such that $u\cdot v=0$,
let $p\left( \theta \right) =u\cos \theta +v\sin \theta $, then%
\begin{eqnarray*}
\psi \left( \cos \theta \right) &=&T_{k}f\left( p\left( \theta \right)
\right) \\
&=&\int_{S^{n}}k\left( p\cdot x\right) \phi \left( x\cdot u\right) d\mathcal{%
H}^{n}\left( x\right) .
\end{eqnarray*}%
The argument in the proof of Lemma \ref{lem3.1} tells us $\psi $ is
increasing. Lemma \ref{lem3.2} follows.
\end{proof}

Later on we will need an analogue of the classical Riesz rearrangement
inequality proved in \cite{BT}.

\begin{proposition}[Theorem 2 in \protect\cite{BT}]
\label{prop3.1}Let $k$ be a bounded increasing function on $\left[ -1,1%
\right] $, then for any $f,g\in L^{1}\left( S^{n}\right) $, we have%
\begin{eqnarray}
&&\int_{S^{n}}\int_{S^{n}}k\left( x\cdot y\right) f\left( x\right) g\left(
y\right) d\mu \left( x\right) d\mu \left( y\right)  \label{eq3.17} \\
&\leq &\int_{S^{n}}\int_{S^{n}}k\left( x\cdot y\right) f^{\ast }\left(
x\right) g^{\ast }\left( y\right) d\mu \left( x\right) d\mu \left( y\right) .
\notag
\end{eqnarray}
\end{proposition}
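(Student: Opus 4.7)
The plan is to reduce the inequality via the layer-cake representation to the case of indicator functions, and then prove the set version by iterated two-point symmetrization (polarization) on $S^n$.

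First, I would replace $f$ by $f + c$ for a large constant $c \geq 0$: since $(f+c)^{\ast} = f^{\ast} + c$ and $\int_{S^n} f \, d\mu = \int_{S^n} f^{\ast} d\mu$, and since $T_{k}1$ is a constant, the extra cross terms on both sides of (\ref{eq3.17}) match. Thus we may assume $f, g \geq 0$. Writing $f = \int_{0}^{\infty} \chi_{\{f>s\}} \, ds$, $g = \int_{0}^{\infty} \chi_{\{g>t\}} \, dt$, and expressing the bounded increasing function $k$ on $[-1,1]$ as $k(t) = k(-1) + \nu([-1,t])$ for a finite non-negative Borel measure $\nu$ on $[-1,1]$, Fubini's theorem reduces (\ref{eq3.17}) to showing, for every $s \in [-1,1]$ and every pair of measurable sets $A, B \subset S^n$,
\[
\iint \chi_{\{x \cdot y \geq s\}} \chi_A(x) \chi_B(y) \, d\mu(x)\, d\mu(y) \;\leq\; \iint \chi_{\{x \cdot y \geq s\}} \chi_{A^{\ast}}(x) \chi_{B^{\ast}}(y) \, d\mu(x)\, d\mu(y),
\]
where $A^{\ast}, B^{\ast}$ are the spherical caps centered at $N$ with $|A^{\ast}| = |A|$, $|B^{\ast}| = |B|$.

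Second, I would prove this set inequality by polarization. For a hyperplane $H \subset \mathbb{R}^{n+1}$ through the origin with reflection $\sigma$ and with $H^{+}$ the closed hemisphere containing $N$, define
\[
A^{\sigma} = \bigl[(A \cup \sigma A) \cap H^{+}\bigr] \cup \bigl[(A \cap \sigma A) \cap H^{-}\bigr],
\]
and likewise $B^{\sigma}$. The pointwise identity $x \cdot y - x \cdot \sigma y = 2 (x \cdot \mathbf{n})(y \cdot \mathbf{n})$, with $\mathbf{n}$ the unit normal to $H$ pointing into $H^{+}$, gives $x \cdot y \geq x \cdot \sigma y$ when $x, y$ lie in the same hemisphere and the reverse inequality otherwise. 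Partitioning $A \times B$ according to which of $H^{\pm}$ contains each coordinate, and redistributing mass to the analogous pieces of $A^{\sigma} \times B^{\sigma}$, this pointwise comparison yields the two-point inequality
\[
\iint \chi_{\{x \cdot y \geq s\}} \chi_A \chi_B \, d\mu\, d\mu \;\leq\; \iint \chi_{\{x \cdot y \geq s\}} \chi_{A^{\sigma}} \chi_{B^{\sigma}} \, d\mu\, d\mu.
\]
I would then invoke the standard convergence result (cf.\ Brock-Solynin) that for any measurable $A \subset S^{n}$ a suitable sequence of polarizations $\sigma_{1}, \sigma_{2}, \ldots$ converges to $A^{\ast}$ in measure, apply it jointly to $(A, B)$, and pass to the limit by dominated convergence.

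The main obstacle is the final convergence step: one must choose a single sequence of hyperplanes whose iterated polarizations drive both $A$ and $B$ to caps centered at $N$. The two-point inequality is a short rearrangement calculation once the formula for $x \cdot y - x \cdot \sigma y$ is in hand, and the layer-cake reduction is routine; but selecting the sequence, showing the iterated functional is monotone and bounded, and identifying the limit as the cap rearrangement require care, since on $S^{n}$ there is no noncompactness to fight but also no translation structure to exploit. An alternative route, closer to the original argument of \cite{BT}, is to bypass polarization altogether and use Baernstein's star function together with a direct subharmonicity argument on the disk of radii $[-1,1]$.
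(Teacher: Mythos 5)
Your outline is correct, but it takes a genuinely different route from the paper: the paper does not prove Proposition \ref{prop3.1} at all, it simply quotes it as Theorem 2 of \cite{BT}, whose original argument runs through Baernstein's star function and a subharmonicity argument on the auxiliary disk (the alternative you mention at the end). What you propose instead is the polarization proof: a layer-cake reduction to indicator functions and to the kernels $\chi_{\{x\cdot y\geq s\}}$, the two-point inequality for simultaneous polarization of $A$ and $B$ with respect to a hyperplane through the origin (this is exactly the elementary computation from the proof of Lemma 1 in \cite{BT}, which the paper itself reuses later in the equality analysis of Lemma \ref{lem3.3}), and then convergence of iterated polarizations to the cap symmetrization. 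The convergence step is the one place where your argument is not self-contained and you rightly flag it: you need a single sequence of polarizations that symmetrizes $A$ and $B$ simultaneously, and this is a genuine theorem (Brock--Solynin, and Van Schaftingen's universal sequences, which are independent of the set and hence apply jointly) rather than a routine verification; citing it is legitimate, but without it the proof is incomplete. Two smaller points to patch: adding a constant makes $f$ nonnegative only if $f$ is bounded below, so for general $f,g\in L^{1}$ you should first truncate, using $\left( \max \left( f,-m\right) \right) ^{\ast }=\max \left( f^{\ast },-m\right) $, and pass to the limit via the boundedness of $k$; and the representation $k\left( t\right) =k\left( -1\right) +\nu \left( \left[ -1,t\right] \right) $ requires a right-continuous modification of $k$, which is harmless since each set $\left\{ \left( x,y\right) :x\cdot y=t_{0}\right\} $ is null for $\mu \times \mu $. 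In exchange, your route gives an essentially elementary proof that meshes with the machinery the paper already develops in Section \ref{sec3} (Lemmas \ref{lem3.3}--\ref{lem3.5} are all phrased in terms of reflections and the two-point inequality), whereas the paper's citation buys brevity and, in the star-function form of \cite{BT}, an argument that historically also underlies the equality discussion.
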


Here we are going to discuss some results on when the equality is achieved.
We will not aim for most general statements, \ but restrict to case
sufficient for our purpose.

\begin{lemma}
\label{lem3.3}If $k$ is a bounded strictly increasing function on $\left[
-1,1\right] $ and $f,g\in C\left( S^{n},\mathbb{R}\right) $ such that%
\begin{eqnarray}
&&\int_{S^{n}}\int_{S^{n}}k\left( x\cdot y\right) f\left( x\right) g\left(
y\right) d\mu \left( x\right) d\mu \left( y\right)  \label{eq3.18} \\
&=&\int_{S^{n}}\int_{S^{n}}k\left( x\cdot y\right) f^{\ast }\left( x\right)
g^{\ast }\left( y\right) d\mu \left( x\right) d\mu \left( y\right)  \notag
\end{eqnarray}%
and both $f$ and $g$ are not constant functions, then $f$ and $g$ must be
radial symmetric and decreasing with respect to the same point.
\end{lemma}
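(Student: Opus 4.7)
The plan is a polarization (two-point rearrangement) argument on $S^n$. For each closed hemisphere $H \subset S^n$, let $\sigma_H$ denote the reflection of $S^n$ across the bounding equator $\partial H$, and define the polarization
\[
f^H(x) = \max\{f(x), f(\sigma_H x)\} \text{ if } x \in H, \qquad f^H(x) = \min\{f(x), f(\sigma_H x)\} \text{ if } x \notin H.
\]
Polarization preserves the distribution function, so $(f^H)^{\ast} = f^{\ast}$. A short orbit-by-orbit calculation on each $\mathbb{Z}_2 \times \mathbb{Z}_2$ orbit $\{x, \sigma_H x\} \times \{y, \sigma_H y\}$, using $k(\sigma_H x \cdot \sigma_H y) = k(x \cdot y)$, yields the polarization inequality
\[
I(f, g) := \int_{S^n}\int_{S^n} k(x \cdot y) f(x) g(y) \, d\mu(x)\, d\mu(y) \leq I(f^H, g^H),
\]
with pointwise integrand difference of the form $(a(x,y) - b(x,y))\, D(x,y)$, where $a = k(x \cdot y)$, $b = k(x\cdot \sigma_H y)$, and $D$ measures the alignment of $f(x) - f(\sigma_H x)$ with $g(y) - g(\sigma_H y)$.

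Next, I would invoke the fact that a suitable sequence of iterated polarizations converges in $L^2$ to the symmetric rearrangement (a result of Brock--Solynin). The sandwich
\[
I(f, g) \leq I(f^{H_1}, g^{H_1}) \leq I(f^{H_2 H_1}, g^{H_2 H_1}) \leq \cdots \leq I(f^{\ast}, g^{\ast})
\]
combined with the equality hypothesis $I(f, g) = I(f^{\ast}, g^{\ast})$ forces equality in every link, and since the first hemisphere $H_1$ may be chosen arbitrarily, $I(f, g) = I(f^H, g^H)$ holds for \emph{every} hemisphere $H$. Since $k$ is strictly increasing, $a(x, y) > b(x, y)$ on a full-measure subset of $\mathrm{int}(H) \times \mathrm{int}(H)$, so equality forces $D(x, y) = 0$ almost everywhere. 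Together with the continuity of $f$ and $g$ and the non-constancy hypothesis, this yields the co-polarization property: for every hemisphere $H$, either $f^H = f$ and $g^H = g$, or $f^H = f \circ \sigma_H$ and $g^H = g \circ \sigma_H$.

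To extract a common radial symmetry, pick $p \in S^n$ where $f$ attains its maximum. For any hemisphere $H$ with $p \in \mathrm{int}(H)$, the alternative $f^H = f \circ \sigma_H$ would force $f(p) \leq f(\sigma_H p)$, contradicting the maximality of $f(p)$ unless the two sides coincide; hence $f^H = f$ and therefore $g^H = g$ as well, i.e.\ both $f$ and $g$ are aligned with every hemisphere containing $p$. This is equivalent to saying $f(x) \geq f(y)$ (resp.\ $g(x) \geq g(y)$) whenever $d(p, x) < d(p, y)$; continuity promotes this to the full inequality and, for equidistant pairs, to equality. Hence $f$ and $g$ are both radially symmetric and decreasing with respect to the same point $p$.

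The main obstacle will be the degenerate branch of the orbit-wise equality analysis: $D = 0$ a.e.\ is compatible with one of $f$, $g$ being symmetric across $\partial H$ while the other is not, so one must rule out that either function is invariant under enough reflections to break the alignment argument. This is where \emph{both} $f$ and $g$ being non-constant continuous functions is essential --- it prevents either from being reflection-invariant across a dense family of equators --- and where continuity is needed to upgrade almost-everywhere statements into the pointwise co-polarization conclusion.
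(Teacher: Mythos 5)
Your first stage is fine and is essentially the same mechanism the paper uses: equality in (\ref{eq3.18}) forces, for every half space $H$ with reflection $\sigma_H$ across $\partial H$, the alignment condition $\left( f\left( x\right) -f\left( \sigma_H x\right) \right) \left( g\left( y\right) -g\left( \sigma_H y\right) \right) \geq 0$ for all $x,y\in H\cap S^{n}$, because $k\left( x\cdot y\right) >k\left( x\cdot \sigma_H y\right) $ strictly inside $H\times H$. You do not need Brock--Solynin iterated polarizations for this: Proposition \ref{prop3.1} applied to the polarized pair $f_{H},g_{H}$, together with $\left( f_{H}\right) ^{\ast }=f^{\ast }$, already yields $I\left( f_{H},g_{H}\right) \leq I\left( f^{\ast },g^{\ast }\right) $ and hence equality $I\left( f,g\right) =I\left( f_{H},g_{H}\right) $ for each individual $H$; this is exactly how the paper runs the argument (in contrapositive form, with strictness on neighborhoods $U\left( x_{0}\right) \times U\left( y_{0}\right) $).

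The genuine gap is in the second stage, which is where the real content of the lemma lies. The ``co-polarization property'' you assert --- for every $H$, either $f^{H}=f$ and $g^{H}=g$, or $f^{H}=f\circ \sigma_H$ and $g^{H}=g\circ \sigma_H$ --- is not a consequence of the alignment condition: if $g\equiv g\circ \sigma_H$ the condition holds with no constraint whatsoever on $f$, and vice versa. You flag this degenerate branch yourself as ``the main obstacle'' but then dispose of it by assertion (``a non-constant continuous function cannot be reflection-invariant across a dense family of equators''), and that assertion, plus the derivation that failure of alignment for a single hemisphere forces invariance of the other function under a whole open set of reflections, is precisely what must be proved. In the paper this is the combination of Lemma \ref{lem3.5} (the strict inequalities $f\left( x_{0}\right) >f\left( R_{u}x_{0}\right) $, $f\left( y_{0}\right) <f\left( R_{u}y_{0}\right) $ persist for all $u$ near $u_{0}$, forcing $g=g_{R_{u}}$) and Lemma \ref{lem3.6} (reflections $R_{u}$ with $u$ in any neighborhood generate $\limfunc{O}\left( n+1\right) $, proved by induction), after which the single-function symmetry statement Lemma \ref{lem3.4} and the sublevel-set argument $\left\{ f<t\right\} =B_{\delta }^{S^{n}}\left( S\right) $ match the centers of $f$ and $g$. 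Without this input your step ``hence $f^{H}=f$ and therefore $g^{H}=g$ as well'' is unjustified: for hemispheres across whose equator $f$ happens to be symmetric, nothing determines the branch for $g$, so the common center is not extracted. There is also a smaller loose end in the max-point argument: $f^{H}=f\circ \sigma_H$ together with $f\left( p\right) =f\left( \sigma_H p\right) =\max f$ is not a contradiction, so maximality alone does not exclude the alternative branch. Your idea of reading off radial monotonicity from hemispheres whose interior contains a maximum point is attractive and could replace the induction in Lemma \ref{lem3.4}, but only after the degenerate symmetric cases are genuinely ruled out along the lines of Lemmas \ref{lem3.5} and \ref{lem3.6}.
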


To continue, we recall some basic operations in \cite{BT}. Let $u$ be a unit
vector, then we have a half space%
\begin{equation*}
H=\left\{ x\in \mathbb{R}^{n+1}:x\cdot u\geq 0\right\} .
\end{equation*}%
Note that $u$ is simply the inner normal direction at $\partial H$. Let $R$
be the reflection with respect to $\partial H$. For any function $f$ on $%
S^{n}$, we denote%
\begin{equation*}
f_{H}\left( x\right) =\left\{ 
\begin{array}{cc}
\max \left\{ f\left( x\right) ,f\left( Rx\right) \right\} , & \text{if }x\in
H\cap S^{n}; \\ 
\min \left\{ f\left( x\right) ,f\left( Rx\right) \right\} , & \text{if }x\in
S^{n}\backslash H.%
\end{array}%
\right.
\end{equation*}%
Note that $f_{H}$ has the same distribution functions as $f$. In particular, 
$\left( f_{H}\right) ^{\ast }=f^{\ast }$.

\begin{lemma}
\label{lem3.4}If $f\in C\left( S^{n},\mathbb{R}\right) $ such that for every
half space $H$ and $x,y\in S^{n}\cap H$ we have%
\begin{equation*}
\left( f\left( x\right) -f\left( Rx\right) \right) \left( f\left( y\right)
-f\left( Ry\right) \right) \geq 0,
\end{equation*}%
here $R$ is the reflection with respect to $\partial H$, then $f$ must be
radial symmetric and decreasing with respect to some point.
\end{lemma}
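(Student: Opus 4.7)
The plan is to choose the center of symmetry to be a point $p\in S^{n}$ at which the continuous function $f$ attains its maximum; such a point exists by compactness. The first key observation is that for \emph{any} halfspace $H$ (with $\partial H$ passing through the origin) whose interior contains $p$, the reflection $R$ across $\partial H$ must satisfy $f\geq f\circ R$ throughout $S^{n}\cap H$. Indeed, $Rp\in S^{n}\setminus H$, so $f(p)-f(Rp)\geq 0$ by maximality of $p$, and then the hypothesis applied to the pair $(p,z)$ with $z\in S^{n}\cap H$ arbitrary forces $f(z)-f(Rz)\geq 0$.

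Next, given any two distinct points $x,y\in S^{n}$ with $p\cdot x>p\cdot y$, I construct the specific halfspace $H=\{z\in\mathbb{R}^{n+1}:z\cdot(x-y)\geq 0\}$. Its boundary passes through the origin, $p$ lies in its interior since $p\cdot(x-y)>0$, and a direct computation using $|x-y|^{2}=2(1-x\cdot y)$ shows that the associated reflection $R$ sends $x$ to $y$ (and $y$ to $x$). Combining this with the previous step yields $f(x)\geq f(Rx)=f(y)$. When $p\cdot x=p\cdot y$ and $x\neq y$, we have $p\cdot x<1$, so I approximate $x$ by a sequence $z_{m}\in S^{n}$ with $z_{m}\to x$ and $p\cdot z_{m}>p\cdot x=p\cdot y$; the preceding step gives $f(z_{m})\geq f(y)$, and continuity of $f$ yields $f(x)\geq f(y)$. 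By symmetry $f(y)\geq f(x)$, hence $f(x)=f(y)$.

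Putting these steps together, $f(x)$ depends only on $t=x\cdot p$ through an increasing function $\phi:[-1,1]\to\mathbb{R}$ defined by $\phi(x\cdot p)=f(x)$, which is exactly the assertion that $f$ is radial symmetric and decreasing with respect to $p$ in the sense of Definition~\ref{def3.1}. The only delicate points are the geometric verification that the perpendicular bisector halfspace constructed in the second paragraph has $\partial H$ through the origin and contains $p$ in its interior — both immediate from $p\cdot x>p\cdot y$ — and the short continuity argument handling the boundary case $p\cdot x=p\cdot y$. Neither is a serious obstacle; the real content of the lemma is the choice of the maximizer $p$ as the candidate symmetry center, which fixes once and for all the sign in the hypothesis $(f(x)-f(Rx))(f(y)-f(Ry))\geq 0$ across all relevant halfspaces.
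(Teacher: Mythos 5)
Your argument has a genuine gap at its first key step. From the hypothesis applied to the pair $(p,z)$ you may conclude $f(z)\geq f(Rz)$ only when $f(p)-f(Rp)>0$ \emph{strictly}; if $f(p)=f(Rp)$, the product condition $\left( f(p)-f(Rp)\right) \left( f(z)-f(Rz)\right) \geq 0$ is vacuous, and maximality of $p$ only gives $f(p)-f(Rp)\geq 0$. This is not a removable technicality, because the maximum of $f$ can be attained on a large set, and then an arbitrarily chosen maximum point is simply not a center of symmetry. Concretely, take $q\in S^{n}$ and $f(x)=\min \left\{ 0,x\cdot q\right\} $. This $f$ is radial symmetric and decreasing with respect to $q$, hence satisfies the hypothesis of the lemma (for any half space $H=\left\{ z:z\cdot u\geq 0\right\} $ the difference $f-f\circ R$ has a constant sign on $S^{n}\cap H$ determined by the sign of $q\cdot u$), and its maximum value $0$ is attained on the whole cap $\left\{ x\cdot q\geq 0\right\} $. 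If you pick a maximizer $p$ with $p\cdot q=0$, then $f$ is not radial symmetric with respect to $p$, yet your steps would "prove" that it is: on $S^{1}$ with $q=(1,0)$, $p=(0,1)$, $x=(-1,0)$, $y=(0,-1)$ one has $p\cdot x>p\cdot y$ but $f(x)=-1<0=f(y)$. The failure occurs exactly where your first step is invoked with $f(p)=f(Rp)$: for the perpendicular bisector half space of $x$ and $y$, the point $p$ lies in the interior of $H$ while the reflection inequality goes the other way, since that half space "sees" only the flat top of $f$ at $p$.

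Your strategy does work when the maximizer is unique (then $f(p)>f(Rp)$ automatically whenever $Rp\neq p$), but the lemma allows flat top and bottom level sets, and identifying the correct symmetry point in that generality is the real content. The paper proceeds quite differently: it argues by induction on $n$, fixes a value $t$ with $\min_{S^{n}}f<t<\max_{S^{n}}f$, chooses a geodesic ball of \emph{largest} radius contained in the sublevel set $\left\{ f<t\right\} $, and then uses reflections adapted to that extremal ball, together with the inductive hypothesis applied to hyperplane sections $L\cap S^{n}$, to show $f$ is radial symmetric and decreasing about the antipode of the ball's center. To repair your proof you would need to select the symmetry point by some such extremal geometric device (or show the top level set is itself a geodesic cap and take its center), rather than an arbitrary maximizer; as written, the deduction "$p$ interior to $H$ implies $f\geq f\circ R$ on $H$" is false.
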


\begin{proof}
At first we note that if $x\in S^{n}\cap H$ such that $f\left( x\right)
>f\left( Rx\right) $, then for every $y\in S^{n}\cap H$, $f\left( y\right)
\geq f\left( Ry\right) $. We will use this assertion repeatedly.

The lemma will be proved by induction on $n$. First assume $n=1$. In this
case we identify $\mathbb{R}^{2}$ with $\mathbb{C}$ as usual. We can assume $%
f$ is not a constant function, then we fix a number $t$ such that $%
\min_{S^{1}}f<t<\max_{S^{1}}f$. Let $E=\left\{ f<t\right\} $ and the
connected components of $E$ are open arcs. After rotation, we can assume one
connected component is written as $\left\{ e^{i\theta }:-\delta <\theta
<\delta \right\} $, here $0<\delta <\pi $. For any $0<\alpha <\pi $, we
claim $f\left( e^{i\alpha }\right) =f\left( e^{-i\alpha }\right) $, that is, 
$f$ is symmetric with respect to the real axis. Indeed, for $0<\varepsilon
<\min \left\{ \alpha ,\delta \right\} $, let $H$ be the top half plane cut
by line in the direction $e^{i\frac{\varepsilon }{2}}$. Since $f\left(
e^{i\delta }\right) =t>f\left( e^{i\left( -\delta +\varepsilon \right)
}\right) $, we see $f\left( e^{i\alpha }\right) \geq f\left( e^{i\left(
-\alpha +\varepsilon \right) }\right) $. Let $\varepsilon \downarrow 0$, we
see $f\left( e^{i\alpha }\right) \geq f\left( e^{-i\alpha }\right) $. Repeat
the same argument with $e^{-i\delta }$ in place with $e^{i\delta }$, we see $%
f\left( e^{-i\alpha }\right) \geq f\left( e^{i\alpha }\right) $. The claim
follows. Next we want to show if $0\leq \beta <\alpha \leq \pi $, then $%
f\left( e^{i\alpha }\right) \geq f\left( e^{i\beta }\right) $ if $\alpha
-\beta <\delta $. As a consequence, we know $f$ is radial symmetric and
decreasing with respect to $-1$. Indeed let $H$ be the top half plane cut by
the line in the direction $e^{i\frac{\alpha -\beta }{2}}$. Since $f\left(
e^{i\delta }\right) >f\left( e^{i\left( -\delta +\alpha -\beta \right)
}\right) $, we see $f\left( e^{i\alpha }\right) \geq f\left( e^{-i\beta
}\right) =f\left( e^{i\beta }\right) $. The claim follows.

Assume for some $n\geq 2$, the lemma is correct for functions on $S^{n-1}$,
we want to show it is also valid on $S^{n}$. Again we assume $f$ is not a
constant function and $\min_{S^{n}}f<t<\max_{S^{n}}f$. Let $E=\left\{
f<t\right\} $. For $p\in S^{n}$ and $0<r\leq \pi $, we denote $%
B_{r}^{S^{n}}\left( p\right) $ as the open geodesic ball of radius $r$ on $%
S^{n}$ with center $p$. We can find a geodesic ball with largest radius
among all such balls contained in $E$. By rotation we assume it is given by $%
B_{\delta }^{S^{n}}\left( S\right) $, here $S$ is the south pole. Then there
two different points $p_{1},p_{2}$ in $\partial B_{\delta }^{S^{n}}\left(
N\right) \cap \partial E$. Note that $f\left( p_{1}\right) =f\left(
p_{2}\right) =t$. Let $L$ be any hyperplane (which may not pass the origin)
in $\mathbb{R}^{n+1}$ containing $p_{1}$ and $p_{2}$, then by applying the
induction hyphothesis on $\left. f\right\vert _{L\cap S^{n}}$, we conclude
that $\left. f\right\vert _{S^{n}\backslash B_{\delta }^{S^{n}}\left(
S\right) }\geq t$. Hence $\left. f\right\vert _{\partial B_{\delta
}^{S^{n}}\left( S\right) }=t$.

Now we claim $f$ is radial symmetric and decreasing with respect to $N$.

Indeed assume $0<\alpha <\pi $, $u^{\prime },u^{\prime \prime }\in S^{n-1}$
and $u^{\prime }\neq u^{\prime \prime }$, we will show%
\begin{equation*}
f\left( u^{\prime }\sin \alpha ,\cos \alpha \right) =f\left( u^{\prime
\prime }\sin \alpha ,\cos \alpha \right) .
\end{equation*}%
For $\alpha <\beta <\pi $ and $\beta $ close to $\alpha $, we will show%
\begin{equation*}
f\left( u^{\prime }\sin \alpha ,\cos \alpha \right) \geq f\left( u^{\prime
\prime }\sin \beta ,\cos \beta \right) .
\end{equation*}%
Let $\beta \downarrow \alpha $, we get%
\begin{equation*}
f\left( u^{\prime }\sin \alpha ,\cos \alpha \right) \geq f\left( u^{\prime
\prime }\sin \alpha ,\cos \alpha \right) .
\end{equation*}%
Switching $u^{\prime }$ and $u^{\prime \prime }$, we get the inequality in
the other direction and hence the equality. Let%
\begin{equation*}
u=\frac{\left( u^{\prime }\sin \alpha ,\cos \alpha \right) -\left( u^{\prime
\prime }\sin \beta ,\cos \beta \right) }{\left\Vert \left( u^{\prime }\sin
\alpha ,\cos \alpha \right) -\left( u^{\prime \prime }\sin \beta ,\cos \beta
\right) \right\Vert }.
\end{equation*}%
Since $\beta $ is close to $\alpha $, $u$ is close to $\left( \frac{%
u^{\prime }-u^{\prime \prime }}{\left\Vert u^{\prime }-u^{\prime \prime
}\right\Vert },0\right) $. Let $H=\left\{ x\in \mathbb{R}^{n+1}:x\cdot u\geq
0\right\} $, and $R$ be the reflection with respect to $\partial H$. Let%
\begin{equation*}
x=\left( \frac{u^{\prime }-u^{\prime \prime }}{\left\Vert u^{\prime
}-u^{\prime \prime }\right\Vert }\sin \delta ,-\cos \delta \right) ,
\end{equation*}%
note that $Rx=x-2\left( x\cdot u\right) u$. Since $x\cdot u$ is close to $%
\sin \delta $, we know $x\cdot u>0$. Hence 
\begin{equation*}
\left( Rx\right) _{n+1}=-\cos \delta -2\left( x\cdot u\right) u_{n+1}<-\cos
\delta
\end{equation*}%
and it follows that $Rx\in B_{\delta }^{S^{n}}\left( S\right) $. In
particular $f\left( x\right) >f\left( Rx\right) $. Since%
\begin{equation*}
\left( u^{\prime }\sin \alpha ,\cos \alpha \right) \cdot u>0
\end{equation*}%
and%
\begin{equation*}
R\left( u^{\prime }\sin \alpha ,\cos \alpha \right) =\left( u^{\prime \prime
}\sin \beta ,\cos \beta \right) ,
\end{equation*}%
we see%
\begin{equation*}
f\left( u^{\prime }\sin \alpha ,\cos \alpha \right) \geq f\left( u^{\prime
\prime }\sin \beta ,\cos \beta \right) .
\end{equation*}%
It follows that $f$ is radial symmetric and decreasing with respect to $N$.
\end{proof}

Next we generalize the Lemma \ref{lem3.4} to two functions setting.

\begin{lemma}
\label{lem3.5}If $f,g\in C\left( S^{n},\mathbb{R}\right) $ such that for
every half space $H$ and $x,y\in S^{n}\cap H$ we have%
\begin{equation*}
\left( f\left( x\right) -f\left( Rx\right) \right) \left( g\left( y\right)
-g\left( Ry\right) \right) \geq 0,
\end{equation*}%
here $R$ is the reflection with respect to $\partial H$, and both $f$ and $g$
are not constant functions, then $f$ and $g$ must be radial symmetric and
decreasing with respect to the same point.
\end{lemma}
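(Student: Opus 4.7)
The plan is to adapt the proof of Lemma~\ref{lem3.4} to the two-function setting, using the hypothesis of Lemma~\ref{lem3.5} to transfer polarization between $f$ and $g$. The key observation I will use throughout is the following consequence of the hypothesis: given a half space $H$ with reflection $R$, if $g(y_{0}) \neq g(R y_{0})$ for some $y_{0} \in S^{n} \cap H$, then substituting $y = y_{0}$ into $(f(x)-f(Rx))(g(y)-g(Ry)) \geq 0$ forces $f - f \circ R$ to have a definite sign (namely that of $g(y_{0}) - g(Ry_{0})$) throughout $S^{n} \cap H$. In particular, if $g$ is polarized non-trivially under $R$, then $f$ is polarized in the same direction; the analogous statement with the roles of $f$ and $g$ swapped holds because the hypothesis is symmetric under the interchange $f \leftrightarrow g$ (and relabeling $x \leftrightarrow y$).

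For the base case $n = 1$, I identify $S^{1} \subset \mathbb{C}$ and, since $g$ is nonconstant, pick $t \in (\min g, \max g)$ and a connected component of $\{g<t\}$, say $\{e^{i\theta}:|\theta|<\delta\}$ after rotation, with $g(e^{\pm i\delta})=t$. For small $\varepsilon>0$ the reflection $R_{\varepsilon}:e^{i\theta}\mapsto e^{i(\varepsilon-\theta)}$ satisfies $g(e^{i\delta}) = t > g(R_{\varepsilon}e^{i\delta})$, so $g$ is not $R_{\varepsilon}$-symmetric. By the key observation, $f(y)\geq f(R_{\varepsilon}y)$ for all $y$ in the corresponding half plane. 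Specializing $y=e^{i\alpha}$ and letting $\varepsilon \downarrow 0$ gives $f(e^{i\alpha}) \geq f(e^{-i\alpha})$; the same argument from $e^{-i\delta}$ gives the reverse, hence $f(e^{i\alpha}) = f(e^{-i\alpha})$. A second family of reflections $R:e^{i\theta}\mapsto e^{i(\alpha-\beta-\theta)}$ (as in the proof of Lemma~\ref{lem3.4}) gives monotonicity, so $f$ is radial symmetric and decreasing with respect to $-1$. Exchanging the roles of $f$ and $g$ and using a component of $\{f<s\}$ in place of $\{g<t\}$ yields the same conclusion for $g$ with the same axis $-1$.

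For the inductive step with $n \geq 2$, assume the result on $S^{n-1}$, choose $t \in (\min g, \max g)$, set $E=\{g<t\}$, and, as in the proof of Lemma~\ref{lem3.4}, take a largest geodesic ball $B_{\delta}^{S^{n}}(S) \subset E$; maximality gives two distinct points $p_{1}, p_{2} \in \partial B_{\delta}^{S^{n}}(S) \cap \partial E$, so $g(p_{1})=g(p_{2})=t$. For any hyperplane $L \subset \mathbb{R}^{n+1}$ containing $p_{1}, p_{2}$, reflections of $S^{n}$ preserving $L$ restrict to the full reflection family of the $(n-1)$-sphere $L \cap S^{n}$, and the hypothesis of Lemma~\ref{lem3.5} descends to $f|_{L\cap S^{n}}, g|_{L\cap S^{n}}$. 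When both restrictions are nonconstant, the inductive hypothesis gives a common axis of radial symmetry on $L \cap S^{n}$; combined with $g(p_{i})=t$ and $g<t$ on $B_{\delta}^{S^{n}}(S) \cap L$, this forces $g \geq t$ on $(L \cap S^{n}) \setminus B_{\delta}^{S^{n}}(S)$. Varying $L$ covers $S^{n} \setminus \{p_{1},p_{2}\}$, so $\{g<t\} = B_{\delta}^{S^{n}}(S)$. Now the final reflection argument from the proof of Lemma~\ref{lem3.4} applies verbatim, except that whenever one produces a reflection $R$ with $g(x)>g(Rx)$ for some $x$, the key observation also polarizes $f$ under $R$; running this for all admissible pairs of points yields that both $f$ and $g$ are radial symmetric and decreasing with respect to $N$.

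The main obstacle is the possibility that, for a given $L$ through $p_{1}, p_{2}$, one of $f|_{L\cap S^{n}}$ or $g|_{L\cap S^{n}}$ degenerates to a constant, in which case the inductive hypothesis does not directly apply to that slice. This is overcome by the observation that if, say, $f|_{L\cap S^{n}}$ were constant for every hyperplane $L$ through $p_{1}, p_{2}$, then $f$ would be constant on all of $S^{n}$ (the slices $L \cap S^{n}$ cover $S^{n}\setminus\{p_{1},p_{2}\}$), contradicting the nonconstancy hypothesis; a density/continuity argument using the nonexceptional slices then recovers $g \geq t$ on $S^{n} \setminus B_{\delta}^{S^{n}}(S)$. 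A secondary subtlety is verifying that the per-slice common axis produced by the induction assembles into a single global axis shared by $f$ and $g$; this coherence is forced by the geometry of the maximal ball $B_{\delta}^{S^{n}}(S)$, whose antipode $N$ serves as the only admissible common axis.
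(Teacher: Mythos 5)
Your inductive step has a genuine gap, and it is exactly at the point you flag as ``the main obstacle'': your proposed fix does not close it. On a slice $L\cap S^{n}$ through $p_{1},p_{2}$ that meets the open ball $B_{\delta }^{S^{n}}\left( S\right) $, the restriction $g|_{L\cap S^{n}}$ is automatically nonconstant (it equals $t$ at $p_{1}$ and is $<t$ inside the ball), so the only possible degeneracy is that $f|_{L\cap S^{n}}$ is constant; on such a slice the two-function inductive hypothesis gives you no information about $g$. Your remedy only excludes the extreme case in which \emph{every} slice through $p_{1},p_{2}$ is degenerate for $f$; it says nothing about the case in which the degenerate slices form a family with nonempty interior in the pencil of hyperplanes through $p_{1},p_{2}$. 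The appeal to ``a density/continuity argument using the nonexceptional slices'' cannot work as stated: already for $n=2$ every point of $S^{2}$ off the line through $p_{1},p_{2}$ lies on exactly \emph{one} plane containing $p_{1}$ and $p_{2}$, so points sitting on exceptional slices are never reached by nonexceptional ones, and nothing in your hypotheses prevents $f$ from being constant (equal to $f\left( p_{1}\right) $) on a whole open lune of such slices while being nonconstant globally. Consequently you cannot conclude $\left\{ g<t\right\} =B_{\delta }^{S^{n}}\left( S\right) $, which is the indispensable input for your final reflection argument. (A smaller point: in that final argument the reflections produced from the sublevel ball of $g$ polarize $f$ via the cross hypothesis, so they yield the symmetry of $f$ about $N$ only; to obtain the conclusion for $g$ you need a second pass using a sublevel ball of the now-symmetric $f$, as you did correctly in the base case.)

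The missing idea — and the route the paper takes — is to first upgrade the cross condition to the single-function hypothesis of Lemma \ref{lem3.4} for $f$ alone, rather than carrying both functions through the slicing induction. If for some half space $H$ with inner normal $u_{0}$ one had $f\left( x_{0}\right) >f\left( Rx_{0}\right) $ and $f\left( y_{0}\right) <f\left( Ry_{0}\right) $ with $x_{0},y_{0}\in H\cap S^{n}$, then the cross condition forces $g\left( x\right) =g\left( Rx\right) $ for all $x\in H\cap S^{n}$, i.e.\ $g=g_{R}$; by continuity the same strict inequalities, hence the same identity $g=g_{R_{u}}$, persist for all reflections $R_{u}$ with $\left\Vert u-u_{0}\right\Vert <\varepsilon $, and since these generate $\limfunc{O}\left( n+1\right) $ (Lemma \ref{lem3.6}) $g$ would be constant, a contradiction. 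After that, Lemma \ref{lem3.4} applies directly to $f$, and the symmetry is transferred to $g$ by one application of the end of that proof using a sublevel ball of $f$ — which is precisely the two-step transfer your own base case implements. As written, your inductive step is incomplete without either this reduction or a genuinely new treatment of the degenerate slices.
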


To reduce Lemma \ref{lem3.5} to Lemma \ref{lem3.4}, we need some basic facts
about generators of the orthogornal group $\limfunc{O}\left( n+1\right) $.
Let $u\in S^{n}$, we have the hyperplane%
\begin{equation*}
u^{\perp }=\left\{ x\in \mathbb{R}^{n+1}:x\cdot u=0\right\} .
\end{equation*}%
The reflection with respect to $u^{\perp }$ is given by%
\begin{equation*}
R_{u}x=x-2\left( x\cdot u\right) u.
\end{equation*}

\begin{lemma}
\label{lem3.6}For any $u_{0}\in S^{n}$ and $\varepsilon >0$, the set%
\begin{equation*}
\left\{ R_{u}:u\in S^{n},\left\Vert u-u_{0}\right\Vert <\varepsilon \right\}
\end{equation*}%
generates $\limfunc{O}\left( n+1\right) $.
\end{lemma}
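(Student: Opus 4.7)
The plan is to invoke the Cartan--Dieudonn\'{e} theorem, which asserts that $O(n+1)$ is generated by the full collection of reflections $\{R_v : v \in S^n\}$. Denoting by $G$ the subgroup generated by $\{R_u : \|u - u_0\| < \varepsilon\}$, it will then suffice to show $R_v \in G$ for every $v \in S^n$.

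The main step is to produce the full rotation group of every $2$-plane through $u_0$ inside $G$. Given such a plane $P$, choose a unit vector $w \in u_0^{\perp}$ with $P = \mathrm{span}(u_0, w)$, and for small $\theta > 0$ set $u_\theta = u_0\cos\theta + w\sin\theta \in S^n$. Both $R_{u_0}$ and $R_{u_\theta}$ fix $P^{\perp}$ pointwise and preserve $P$, and their restrictions to $P$ are line reflections meeting at angle $\theta$; therefore $R_{u_0} R_{u_\theta}$ acts on $P$ as rotation by $\pm 2\theta$ and as the identity on $P^{\perp}$. For $\theta$ small enough that $\|u_\theta - u_0\| < \varepsilon$, this product lies in $G$, so letting $\theta$ vary over an open interval yields an open interval of rotation angles in $G \cap SO(P)$. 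Since any nonempty open interval in $\mathbb{R}/2\pi\mathbb{Z}$ generates the entire circle group as a subgroup, this forces $SO(P) \subset G$.

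Once this is in hand, the rest is immediate. For any $v \in S^n$ pick a $2$-plane $P_v$ through $u_0$ containing $v$ (take $P_v = \mathrm{span}(u_0, v)$ if $v \neq \pm u_0$, otherwise any such plane); then an appropriate element $A \in SO(P_v) \subset G$ satisfies $Au_0 = v$, so
\begin{equation*}
R_v = R_{Au_0} = A R_{u_0} A^{-1} \in G,
\end{equation*}
and Cartan--Dieudonn\'{e} yields $G = O(n+1)$. The only real content is the middle paragraph: one must identify $R_{u_0} R_{u_\theta}$ explicitly as an honest planar rotation (not merely some orthogonal transformation fixing $P^{\perp}$), and then invoke the elementary fact that a nonempty open arc generates all of $\mathrm{SO}(2)$, which promotes the small-angle family $\theta \in (0, \theta_\varepsilon)$ to rotations of arbitrary angle in $P$.
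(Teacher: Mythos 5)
Your proof is correct, but it is organized quite differently from the paper's. The paper proves the lemma by induction on $n$: the base case $n=1$ is exactly your mechanism of composing two nearby reflections to obtain small rotations and hence all of $\limfunc{O}\left( 2\right) $, while the inductive step fixes a vector $v\perp u_{0}$, observes that the stabilizer of $v$ reduces to an orthogonal group of $v^{\perp }$ handled by the induction hypothesis, and then gets every $R_{u}$ into $G$ by choosing $v\perp u$, $v\perp u_{0}$ (this is where $n\geq 2$ enters), finishing with the fact that every orthogonal transformation is a finite product of reflections. You avoid the induction entirely: you generate $SO(P)$ for every $2$-plane $P$ through $u_{0}$ directly (two reflections at angle $\theta $ give a rotation by $2\theta $ on $P$ and the identity on $P^{\perp }$, and an interval of angles generates the circle), and then reach an arbitrary reflection via the conjugation identity $R_{Au_{0}}=AR_{u_{0}}A^{-1}$ with $A\in SO(P_{v})$, before invoking Cartan--Dieudonn\'{e} just as the paper does. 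Your route is more direct and arguably more transparent, since the only dimension-dependent input is the existence of a $2$-plane through $u_{0}$ and $v$, whereas the paper's induction keeps each step one-dimensional at the cost of bookkeeping with stabilizers; both arguments rest on the same two ingredients (small rotations from pairs of nearby reflections, and the reflection-generation theorem for $\limfunc{O}\left( n+1\right) $).
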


\begin{proof}
We prove by induction on $n$. For convenience we denote $G$ as the subgroup
generated by the set $\left\{ R_{u}:u\in S^{n},\left\Vert u-u_{0}\right\Vert
<\varepsilon \right\} $. We hope to show $G=\limfunc{O}\left( n+1\right) $.

Assume $n=1$. As usual we identify $\mathbb{R}^{2}$ with $\mathbb{C}$. We
can write $u_{0}=e^{i\alpha _{0}}$, then for any $0<\delta <\varepsilon $,
we have%
\begin{equation*}
\left\vert e^{i\left( \alpha _{0}+\delta \right) }-e^{i\alpha
_{0}}\right\vert <\delta <\varepsilon
\end{equation*}%
and%
\begin{equation*}
R_{e^{i\left( \alpha _{0}+\delta \right) }}R_{e^{i\alpha _{0}}}e^{i\theta
}=e^{i\left( 2\delta +\theta \right) }.
\end{equation*}%
Hence all rotation lies in $G$. This together with the fact $R_{u_{0}}\in G$
implies $G=\limfunc{O}\left( 2\right) $.

Assume for some $n\geq 2$, the lemma is true for $\limfunc{O}\left( n\right) 
$, we will show it is also true for $\limfunc{O}\left( n+1\right) $. Give $%
v\in S^{n}$ satisfying $v\perp u_{0}$, if $O\in \limfunc{O}\left( n+1\right) 
$ such that $Ov=v$, then $\left. O\right\vert _{v^{\perp }}:v^{\perp
}\rightarrow v^{\perp }$ is an orthogonal transformation too. By induction
hypothesis we know $O\in G$. Next for any $u\in S^{n}$, using $n\geq 2$, we
can find $v\in S^{n}$ such that $v\perp u$ and $v\perp u_{0}$. It follows
that $R_{u}v=v$ and hence $R_{u}\in G$. Since every orthogonal
transformation is a finite composition of reflections, we see $G=\limfunc{O}%
\left( n+1\right) $.
\end{proof}

Now we proceed to prove Lemma \ref{lem3.5}.

\begin{proof}[Proof of Lemma \protect\ref{lem3.5}]
We first claim for every half space $H$ and $x,y\in S^{n}\cap H$ we have%
\begin{equation*}
\left( f\left( x\right) -f\left( Rx\right) \right) \left( f\left( y\right)
-f\left( Ry\right) \right) \geq 0.
\end{equation*}%
Indeed if this is not the case, then for some $H$ and $x_{0},y_{0}\in H\cap
S^{n}$ such that%
\begin{equation*}
\left( f\left( x_{0}\right) -f\left( Rx_{0}\right) \right) \left( f\left(
y_{0}\right) -f\left( Ry_{0}\right) \right) <0.
\end{equation*}%
We may assume $f\left( x_{0}\right) >f\left( Rx_{0}\right) $ and $f\left(
y_{0}\right) <f\left( Ry_{0}\right) $. Then for any $x\in H\cap S^{n}$,%
\begin{eqnarray*}
\left( f\left( x_{0}\right) -f\left( Rx_{0}\right) \right) \left( g\left(
x\right) -g\left( Rx\right) \right) &\geq &0; \\
\left( f\left( y_{0}\right) -f\left( Ry_{0}\right) \right) \left( g\left(
x\right) -g\left( Rx\right) \right) &\geq &0.
\end{eqnarray*}%
It follows that $g\left( x\right) =g\left( Rx\right) $. Hence $g=g_{R}$. Let 
$u_{0}$ be the inner normal direction of $H$, then $x_{0}\cdot u_{0}>0$, $%
y_{0}\cdot u_{0}>0$. For $\varepsilon >0$ small and $\left\Vert
u-u_{0}\right\Vert <\varepsilon $, we still have $x_{0}\cdot u>0$, $%
y_{0}\cdot u>0$, $f\left( x_{0}\right) >f\left( R_{u}x_{0}\right) $ and $%
f\left( y_{0}\right) <f\left( R_{u}y_{0}\right) $. It follows that $%
g=g_{R_{u}}$. By Lemma \ref{lem3.6} all these $R_{u}$'s generate $\limfunc{O}%
\left( n+1\right) $, hence $g=g_{O}$ for any $O\in \limfunc{O}\left(
n+1\right) $. This implies $g$ must be a constant function and it
contradicts with the assumption.

By Lemma \ref{lem3.4} we know $f$ must be radial symmetric and decreasing
with respect to some point on $S^{n}$. Without losing of generality we
assume that point in the north pole $N$. Since $f$ is not a constant
function, we can find $t$ such that $f\left( S\right) <t<f\left( N\right) $.
Then%
\begin{equation*}
\left\{ f<t\right\} =B_{\delta }^{S^{n}}\left( S\right)
\end{equation*}%
for some $\delta \in \left( 0,\pi \right) $. The argument at the end of
proof of Lemma \ref{lem3.4} tells us $g$ must be radial symmetric and
decreasing with respect to $N$ too.
\end{proof}

We are ready to prove Lemma \ref{lem3.3}.

\begin{proof}[Proof of Lemma \protect\ref{lem3.3}]
We prove it by a contradiction argument. If the conclusion is not true, then
by Lemma \ref{lem3.5} we can find a half space $H$ and $x_{0},y_{0}\in H\cap
S^{n}$ such that%
\begin{equation*}
\left( f\left( x_{0}\right) -f\left( Rx_{0}\right) \right) \left( g\left(
y_{0}\right) -g\left( Ry_{0}\right) \right) <0.
\end{equation*}%
Here $R$ denotes the reflection with respect to $\partial H$. Without losing
of generality we assume $f\left( x_{0}\right) >f\left( Rx_{0}\right) $ and $%
g\left( y_{0}\right) <g\left( Ry_{0}\right) $. Note that $x_{0},y_{0}\notin
\partial H$. By continuity of $f$ and $g$ we can find a neighborhood of $%
x_{0}$ in $S^{n}$, namely $U\left( x_{0}\right) $, such that $U\left(
x_{0}\right) \subset H$ and for every $x\in U\left( x_{0}\right) $, $f\left(
x\right) >f\left( Rx\right) $. Similarly, we find $U\left( y_{0}\right)
\subset H$ such that for every $y\in U\left( y_{0}\right) $, $g\left(
y\right) <g\left( Ry\right) $. For function $F$ and $G$ on $S^{n}$, we have%
\begin{eqnarray*}
&&\int_{S^{n}}\int_{S^{n}}k\left( x\cdot y\right) F\left( x\right) G\left(
y\right) d\mu \left( x\right) d\mu \left( y\right) \\
&=&\int_{H\cap S^{n}}\int_{H\cap S^{n}}\left[ k\left( x\cdot y\right) \left(
F\left( x\right) G\left( y\right) +F\left( Rx\right) G\left( Ry\right)
\right) \right. \\
&&\left. +k\left( Rx\cdot y\right) \left( F\left( Rx\right) G\left( y\right)
+F\left( x\right) G\left( Ry\right) \right) \right] d\mu \left( x\right)
d\mu \left( y\right) .
\end{eqnarray*}%
A careful but elementary calculation shows (see \cite[proof of lemma 1]{BT})
for $x,y\in H\cap S^{n}$,%
\begin{eqnarray*}
&&k\left( x\cdot y\right) \left( f\left( x\right) g\left( y\right) +f\left(
Rx\right) g\left( Ry\right) \right) +k\left( Rx\cdot y\right) \left( f\left(
Rx\right) g\left( y\right) +f\left( x\right) g\left( Ry\right) \right) \\
&\leq &k\left( x\cdot y\right) \left( f_{H}\left( x\right) g_{H}\left(
y\right) +f_{H}\left( Rx\right) g_{H}\left( Ry\right) \right) \\
&&+k\left( Rx\cdot y\right) \left( f_{H}\left( Rx\right) g_{H}\left(
y\right) +f_{H}\left( x\right) g_{H}\left( Ry\right) \right) .
\end{eqnarray*}%
For $x\in U\left( x_{0}\right) $ and $y\in U\left( y_{0}\right) $, \ this
inequality is strict. Indeed in this case,%
\begin{eqnarray*}
&&\limfunc{RHS}-\limfunc{LHS} \\
&=&\left( k\left( x\cdot y\right) -k\left( Rx\cdot y\right) \right) \left(
f\left( x\right) -f\left( Rx\right) \right) \left( g\left( Ry\right)
-g\left( y\right) \right) \\
&>&0.
\end{eqnarray*}%
Here $\limfunc{RHS}$ and $\limfunc{LHS}$ mean the right hand side and left
hand side respectively of the above inequality. It follows that%
\begin{eqnarray*}
&&\int_{S^{n}}\int_{S^{n}}k\left( x\cdot y\right) f\left( x\right) g\left(
y\right) d\mu \left( x\right) d\mu \left( y\right) \\
&<&\int_{S^{n}}\int_{S^{n}}k\left( x\cdot y\right) f_{H}\left( x\right)
g_{H}\left( y\right) d\mu \left( x\right) d\mu \left( y\right) \\
&\leq &\int_{S^{n}}\int_{S^{n}}k\left( x\cdot y\right) \left( f_{H}\right)
^{\ast }\left( x\right) \left( g_{H}\right) ^{\ast }\left( y\right) d\mu
\left( x\right) d\mu \left( y\right) \\
&=&\int_{S^{n}}\int_{S^{n}}k\left( x\cdot y\right) f^{\ast }\left( x\right)
g^{\ast }\left( y\right) d\mu \left( x\right) d\mu \left( y\right) .
\end{eqnarray*}%
This gives us a contradiction with the equality (\ref{eq3.18}).
\end{proof}

\section{Every minimizer of perturbation problem must be radial symmetric
and decreasing\label{sec4}}

Recall that for any $O\in \limfunc{O}\left( 4\right) $ and function $f$ on $%
S^{3}$, we write $f_{O}\left( x\right) =f\left( Ox\right) $ for $x\in S^{3}$%
. The Paneitz operator $P$ is invariant under the orthogornal group i.e. for
any $f\in C^{\infty }\left( S^{3}\right) $ and $O\in \limfunc{O}\left(
4\right) $, $\left( Pf\right) _{O}=Pf_{O}$. Hence the Green's function $%
G_{P} $ is invariant too. For convenience we denote%
\begin{equation}
K=-G_{P}.  \label{eq4.1}
\end{equation}%
Then%
\begin{equation}
K\left( x,y\right) =\frac{\left\Vert x-y\right\Vert }{8\pi }\quad \text{for }%
x,y\in S^{3}.  \label{eq4.2}
\end{equation}%
The associated single variable function $k$ is given by%
\begin{equation}
k\left( t\right) =\frac{\sqrt{1-t}}{4\sqrt{2}\pi }\quad \text{for }t\in %
\left[ -1,1\right] .  \label{eq4.3}
\end{equation}%
Similarly for $\varepsilon >0$ small, $G_{P+\varepsilon }$ is also
invariant. Let $K_{\varepsilon }=-G_{P+\varepsilon }$ and the associated
single variable function is denoted as $k_{\varepsilon }$. Since%
\begin{equation}
\left( P+\varepsilon \right) ^{-1}=\sum_{j=0}^{\infty }\left( -\varepsilon
\right) ^{j}\left( P^{-1}\right) ^{j+1},  \label{eq4.4}
\end{equation}%
we see%
\begin{equation}
G_{P+\varepsilon }=\sum_{j=0}^{\infty }\left( -\varepsilon \right) ^{j}%
\overset{j+1}{\overbrace{G_{P}\ast \cdots \ast G_{P}}}.  \label{eq4.5}
\end{equation}%
It follows that%
\begin{equation}
K_{\varepsilon }=\sum_{j=0}^{\infty }\varepsilon ^{j}\overset{j+1}{%
\overbrace{K\ast \cdots \ast K}}  \label{eq4.6}
\end{equation}%
and%
\begin{equation}
k_{\varepsilon }=\sum_{j=0}^{\infty }\varepsilon ^{j}\overset{j+1}{%
\overbrace{k\ast \cdots \ast k}}.  \label{eq4.7}
\end{equation}

\begin{lemma}
\label{lem4.1} For $\varepsilon >0$ small, $k_{\varepsilon }$ is strictly
decreasing on $\left[ -1,1\right] $.
\end{lemma}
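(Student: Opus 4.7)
The plan is to compare $k_{\varepsilon}$ to $k$ and show that the correction is small enough to preserve strict monotonicity. From $k(t)=\sqrt{1-t}/(4\sqrt{2}\pi)$, a direct computation gives, for any $-1\le t_{1}<t_{2}\le 1$,
\[
k(t_{1})-k(t_{2})=\frac{t_{2}-t_{1}}{4\sqrt{2}\pi\bigl(\sqrt{1-t_{1}}+\sqrt{1-t_{2}}\bigr)}\ge \frac{t_{2}-t_{1}}{16\pi}.
\]
From (\ref{eq4.7}) one has the resolvent identity $k_{\varepsilon}=k+\varepsilon (k\ast k_{\varepsilon})$; setting $R_{\varepsilon}:=k\ast k_{\varepsilon}$, the goal reduces to establishing a uniform Lipschitz bound $|R_{\varepsilon}(t_{1})-R_{\varepsilon}(t_{2})|\le C|t_{2}-t_{1}|$ on $[-1,1]$ with $C$ independent of $\varepsilon\in(0,\varepsilon_{0}]$. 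Granting this,
\[
k_{\varepsilon}(t_{1})-k_{\varepsilon}(t_{2})\ge \Bigl(\tfrac{1}{16\pi}-\varepsilon C\Bigr)(t_{2}-t_{1})>0
\]
for $\varepsilon<1/(16\pi C)$, yielding the strict monotonicity on $[-1,1]$.

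For the Lipschitz bound, first note that $\|k_{\varepsilon}\|_{L^{\infty}}$ is uniformly bounded for small $\varepsilon$, since the Neumann series (\ref{eq4.7}) converges in $L^{\infty}(S^{3})$ (the operator $f\mapsto k\ast f$ has $L^{\infty}\to L^{\infty}$ norm at most $\|k\|_{\infty}|S^{3}|$). On $S^{3}$ the Paneitz operator $P$ is invertible since its eigenvalue on constants is $-15/16\ne 0$ and all other eigenvalues are strictly positive; moreover, the convolution transform coincides with $-P^{-1}$ (because $PG_{P}=\delta$). Standard elliptic regularity for the fourth-order operator $P$ then gives $R_{\varepsilon}=-P^{-1}k_{\varepsilon}\in H^{4}(S^{3})$ with norm controlled by $\|k_{\varepsilon}\|_{L^{2}}$, and the Sobolev embedding $H^{4}(S^{3})\hookrightarrow C^{2,1/2}(S^{3})$ (valid because $4-3/2>2$) yields $R_{\varepsilon}\in C^{2}(S^{3})$ with $C^{2}$-norm bounded uniformly in $\varepsilon$.

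The final step transfers this regularity on $S^{3}$ into Lipschitz regularity in the zonal coordinate $t=\cos\theta$. Set $g(\theta):=R_{\varepsilon}(\cos\theta)$. Because $R_{\varepsilon}$ is invariant under the stabilizer of $N$ in $\mathrm{O}(4)$, rotational symmetry forces the $S^{3}$-gradient of $K\ast k_{\varepsilon}$ to vanish at $\pm N$, so $g'(0)=g'(\pi)=0$. Combined with $g\in C^{2}([0,\pi])$, this gives $g'(\theta)=O(\theta)$ near $0$ and $O(\pi-\theta)$ near $\pi$, whence $R_{\varepsilon}'(t)=-g'(\theta)/\sin\theta$ is uniformly bounded on $(-1,1)$, supplying the required Lipschitz constant.

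The main obstacle is precisely this last passage: a naive estimate using only the Lipschitz continuity of $K(x,y)=\|x-y\|/(8\pi)$ gives only H\"older-$1/2$ regularity of $R_{\varepsilon}$ in the variable $t$ near $t=\pm 1$ (where the parameterization $t=\cos\theta$ degenerates), which is not enough to dominate the lower bound $(t_{2}-t_{1})/(16\pi)$ for $k(t_{1})-k(t_{2})$. Routing through $C^{2}$ regularity on $S^{3}$ combined with the vanishing-gradient-at-the-poles property of zonal functions is what converts the extra smoothness into genuine Lipschitz continuity in the degenerating one-dimensional coordinate.
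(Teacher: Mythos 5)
Your proof is correct, and it takes a genuinely different route from the paper's. The paper groups the Neumann series (\ref{eq4.7}) as $k+\varepsilon\, k\ast k$ plus even-order blocks and invokes Lemma \ref{lem3.1} (monotonicity is preserved by spherical convolution) to dispose of the whole tail, so that the matter reduces to showing $k\ast k\in C^{1}\left( \left[ -1,1\right] \right) $; that last point is proved by an explicit coarea-formula computation, the derivative $\phi \left( \theta \right) /\sin \theta $ extending continuously because $\phi \left( 0\right) =\phi \left( \pi \right) =0$. You instead use the resolvent identity $k_{\varepsilon }=k+\varepsilon\, k\ast k_{\varepsilon }$ to lump the entire correction into one term and control it by softer machinery: uniform $L^{\infty }$ bounds from the Neumann series, invertibility of $P$ (the nonzero eigenvalue $-15/16$ on constants and positivity on the higher modes, consistent with the explicit Green's function (\ref{eq1.15})) together with elliptic regularity and $H^{4}\left( S^{3}\right) \subset C^{2}$, and the observation that a zonal $C^{2}$ function has vanishing gradient at both poles, which converts the uniform $C^{2}$ bound on $S^{3}$ into a uniform Lipschitz bound in the degenerate coordinate $t=\cos \theta $; this is then played off against the explicit modulus $k\left( t_{1}\right) -k\left( t_{2}\right) \geq \left( t_{2}-t_{1}\right) /\left( 16\pi \right) $. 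Your argument is closest in spirit to the paper's alternative proof in Remark \ref{rmk4.1} (regularity of the correction plus a vanishing derivative at the pole), but it replaces the joint $C^{2}$ dependence on $\left( \varepsilon ,x,y\right) $ asserted there by uniform-in-$\varepsilon $ elliptic estimates, avoids Lemma \ref{lem3.1} entirely, and yields an explicit smallness threshold $\varepsilon <1/\left( 16\pi C\right) $. What the paper's main proof buys is elementarity, everything staying within the explicit kernel computations of Section \ref{sec3}; what yours buys is robustness and a quantitative threshold, since the formula (\ref{eq4.3}) enters only through the lower bound on its difference quotients, so the argument would survive perturbations of the kernel.
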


\begin{proof}
We have%
\begin{equation}
k_{\varepsilon }=k+\varepsilon k\ast k+\sum_{j=1}^{\infty }\varepsilon ^{2j}%
\overset{2j}{\overbrace{k\ast \cdots \ast k}}\ast \left( k+\varepsilon k\ast
k\right) .  \label{eq4.8}
\end{equation}%
By Lemma \ref{lem3.1} we only need to know $k+\varepsilon k\ast k$ is
strictly decreasing. By the formula of $k$ in (\ref{eq4.3}) we only need to
show $k\ast k\in C^{1}\left( \left[ -1,1\right] \right) $. As in the proof
of Lemma \ref{lem3.1} we fix two perpendicular unit vectors $u$ and $v$ in $%
\mathbb{R}^{4}$. For $0<\theta <\pi $,%
\begin{equation}
p\left( \theta \right) =u\cos \theta +v\sin \theta ,  \label{eq4.9}
\end{equation}%
then%
\begin{equation}
p^{\prime }\left( \theta \right) =-u\sin \theta +v\cos \theta .
\label{eq4.10}
\end{equation}%
We have%
\begin{equation}
\left( k\ast k\right) \left( \cos \theta \right) =\int_{S^{3}}k\left( u\cdot
x\right) k\left( x\cdot p\left( \theta \right) \right) d\mathcal{H}%
^{3}\left( x\right) .  \label{eq4.11}
\end{equation}%
Here $k$ is given in (\ref{eq4.3}) and%
\begin{equation}
k^{\prime }\left( t\right) =-\frac{1}{8\sqrt{2}\pi }\frac{1}{\sqrt{1-t}}%
\quad \text{for }-1\leq t<1.  \label{eq4.12}
\end{equation}%
Hence for $0<\theta <\pi $,%
\begin{equation}
-\sin \theta \left( k\ast k\right) ^{\prime }\left( \cos \theta \right)
=\int_{S^{3}}k\left( u\cdot x\right) k^{\prime }\left( x\cdot p\right) \cdot
\left( x\cdot p^{\prime }\right) d\mathcal{H}^{3}\left( x\right) .
\label{eq4.13}
\end{equation}%
For convenience we denote%
\begin{equation}
\phi \left( \theta \right) =\int_{S^{3}}k\left( u\cdot x\right) k^{\prime
}\left( x\cdot p\right) \cdot \left( x\cdot p^{\prime }\right) d\mathcal{H}%
^{3}\left( x\right)  \label{eq4.14}
\end{equation}%
for $0\leq \theta \leq \pi $. It is clear that $\phi \in C\left( \left[
0,\pi \right] \right) $. We need to show $\frac{\phi \left( \theta \right) }{%
\sin \theta }$ has a limit at both $0$ and $\pi $. Let $O$ be an orthogornal
matrix with $Ou=p$ and $Ov=p^{\prime }$. Then%
\begin{equation}
O^{-1}u=u\cos \theta -v\sin \theta .  \label{eq4.15}
\end{equation}%
We make a change of variable $x=Oy$ in (\ref{eq4.14}) and get%
\begin{equation}
\phi \left( \theta \right) =\int_{S^{3}}k\left( y\cdot \left( u\cos \theta
-v\sin \theta \right) \right) k^{\prime }\left( y\cdot u\right) \cdot \left(
y\cdot v\right) d\mathcal{H}^{3}\left( y\right) .  \label{eq4.16}
\end{equation}%
It follows from this formula that $\phi \in C^{1}\left( \left[ 0,\pi \right]
\right) $. Moreover%
\begin{eqnarray}
\phi \left( 0\right) &=&\int_{S^{3}}k\left( x\cdot u\right) k^{\prime
}\left( x\cdot u\right) \cdot \left( x\cdot v\right) d\mathcal{H}^{3}\left(
x\right) =0,  \label{eq4.17} \\
\phi \left( \pi \right) &=&\int_{S^{3}}k\left( -x\cdot u\right) k^{\prime
}\left( x\cdot u\right) \cdot \left( x\cdot v\right) d\mathcal{H}^{3}\left(
x\right) =0,  \label{eq4.18}
\end{eqnarray}%
hence $\frac{\phi }{\sin \theta }\in C\left( \left[ 0,\pi \right] \right) $.
Lemma \ref{lem4.1} follows.
\end{proof}

\begin{remark}
\label{rmk4.1}Here is another way to prove Lemma \ref{lem4.1}. First we
observe that $K_{\varepsilon }\left( x,y\right) -K\left( x,y\right) $ is $%
C^{2}$ is variable $\left( \varepsilon ,x,y\right) $. We have%
\begin{equation}
\psi \left( \theta \right) =K\left( \left( \sin \theta ,0,0,\cos \theta
\right) ,\left( 0,0,0,1\right) \right) =k\left( \cos \theta \right)
\label{eq4.19}
\end{equation}%
and%
\begin{equation}
\psi _{\varepsilon }\left( \theta \right) =K_{\varepsilon }\left( \left(
\sin \theta ,0,0,\cos \theta \right) ,\left( 0,0,0,1\right) \right)
=k_{\varepsilon }\left( \cos \theta \right) .  \label{eq4.20}
\end{equation}%
Note that $\psi _{\varepsilon }\left( \theta \right) -\psi \left( \theta
\right) $ in $C^{2}$ in $\left( \varepsilon ,\theta \right) $. Since for $%
t\geq 0$ small, $\psi _{\varepsilon }\left( \pi -t\right) =\psi
_{\varepsilon }\left( \pi +t\right) $, we see $\psi _{\varepsilon }^{\prime
}\left( \pi \right) =0$. On the other hand, for $0\leq \theta \leq \pi $,%
\begin{equation}
\psi \left( \theta \right) =\frac{1}{4\pi }\sin \frac{\theta }{2}.
\label{eq4.21}
\end{equation}%
We have%
\begin{equation}
\psi ^{\prime }\left( \theta \right) =\frac{1}{8\pi }\cos \frac{\theta }{2}>0
\label{eq4.22}
\end{equation}%
for $0\leq \theta <\pi $ and%
\begin{equation}
\psi ^{\prime \prime }\left( \theta \right) =-\frac{1}{16\pi }\sin \frac{%
\theta }{2}<0  \label{eq4.23}
\end{equation}
for $0<\theta \leq \pi $. These together with the fact $\psi _{\varepsilon
}^{\prime }\left( \pi \right) =0$ implies $\psi _{\varepsilon }^{\prime
}\left( \theta \right) >0$ for $0\leq \theta <\pi $ if $\varepsilon $ is
small enough. Hence $\psi _{\varepsilon }$ is strictly increasing on $\left[
0,\pi \right] $ and $k_{\varepsilon }$ is strictly decreasing.
\end{remark}

Let $u$ be a minimizer for (\ref{eq2.1}) with $\left\Vert u^{-1}\right\Vert
_{L^{6}}=1$, then%
\begin{equation}
Pu+\varepsilon u=-s_{\varepsilon }u^{-7}.  \label{eq4.24}
\end{equation}%
We want to show $u$ must be radial symmetric and decreasing with respect to
some point on $S^{3}$. Let $v$ be the smooth function on $S^{3}$ solving%
\begin{equation}
Pv+\varepsilon v=-s_{\varepsilon }\left( u^{\ast }\right) ^{-7}.
\label{eq4.25}
\end{equation}%
Then%
\begin{equation}
v=s_{\varepsilon }T_{k_{\varepsilon }}\left( \left( u^{\ast }\right)
^{-7}\right) .  \label{eq4.26}
\end{equation}%
Denote%
\begin{equation}
\alpha =-E_{\varepsilon }\left( v\right) ,  \label{eq4.27}
\end{equation}%
then%
\begin{eqnarray}
\alpha &=&-\int_{S^{3}}\left( P+\varepsilon \right) v\cdot vd\mu
\label{eq4.28} \\
&=&s_{\varepsilon }^{2}\int_{S^{3}}T_{k_{\varepsilon }}\left( \left( u^{\ast
}\right) ^{-7}\right) \cdot \left( u^{\ast }\right) ^{-7}d\mu  \notag \\
&=&s_{\varepsilon }^{2}\int_{S^{3}}T_{k_{\varepsilon }}\left( \left( \left(
u^{-7}\right) ^{\ast }\right) _{R}\right) \cdot \left( \left( u^{-7}\right)
^{\ast }\right) _{R}d\mu  \notag \\
&=&s_{\varepsilon }^{2}\int_{S^{3}}\left( T_{k_{\varepsilon }}\left( \left(
u^{-7}\right) ^{\ast }\right) \right) _{R}\cdot \left( \left( u^{-7}\right)
^{\ast }\right) _{R}d\mu  \notag \\
&=&s_{\varepsilon }^{2}\int_{S^{3}}T_{k_{\varepsilon }}\left( \left(
u^{-7}\right) ^{\ast }\right) \cdot \left( u^{-7}\right) ^{\ast }d\mu  \notag
\\
&\leq &s_{\varepsilon }^{2}\int_{S^{3}}T_{k_{\varepsilon }}\left(
u^{-7}\right) \cdot u^{-7}d\mu  \notag \\
&=&s_{\varepsilon }\int_{S^{3}}u^{-6}d\mu  \notag \\
&=&s_{\varepsilon }.  \notag
\end{eqnarray}%
Here $R$ is the reflection given by%
\begin{equation}
R\left( x_{1},x_{2},x_{3},x_{4}\right) =\left(
x_{1},x_{2},x_{3},-x_{4}\right) \quad \text{for }x\in S^{3}.  \label{eq4.29}
\end{equation}%
We have used Proposition \ref{prop3.1} in between.

On the other hand, we have%
\begin{equation}
\alpha =s_{\varepsilon }\int_{S^{3}}\left( u^{\ast }\right) ^{-7}vd\mu ,
\label{eq4.30}
\end{equation}%
hence%
\begin{eqnarray}
\alpha \left\Vert v^{-1}\right\Vert _{L^{6}} &=&s_{\varepsilon }\left\Vert
\left( u^{\ast }\right) ^{-7}v\right\Vert _{L^{1}}\left\Vert
v^{-1}\right\Vert _{L^{6}}  \label{eq4.31} \\
&\geq &s_{\varepsilon }\left\Vert \left( u^{\ast }\right) ^{-7}\right\Vert
_{L^{\frac{6}{7}}}  \notag \\
&=&s_{\varepsilon }\left\Vert \left( u^{\ast }\right) ^{-1}\right\Vert
_{L^{6}}^{7}  \notag \\
&=&s_{\varepsilon }\left\Vert u^{-1}\right\Vert _{L^{6}}^{7}  \notag \\
&=&s_{\varepsilon }.  \notag
\end{eqnarray}%
This together with the fact $\alpha \leq s_{\varepsilon }$ tells us $%
\left\Vert v^{-1}\right\Vert _{L^{6}}\geq 1$. We have%
\begin{equation}
s_{\varepsilon }\geq -E_{\varepsilon }\left( v\right) \left\Vert
v^{-1}\right\Vert _{L^{6}}^{2}=\alpha \left\Vert v^{-1}\right\Vert
_{L^{6}}\cdot \left\Vert v^{-1}\right\Vert _{L^{6}}\geq s_{\varepsilon }.
\label{eq4.32}
\end{equation}%
Hence $\left\Vert v^{-1}\right\Vert _{L^{6}}=1$ and $\alpha =s_{\varepsilon
} $. It follows from Lemma \ref{lem3.3} and (\ref{eq4.28}) that $u^{-7}$
must be radial and decreasing with respect to some point on $S^{3}$ and so
is $u$.

\section{A new proof of the sharp Sobolev inequality\label{sec5}}

Following \cite{HWY}, we will first derive a Kazdan-Warner type condition
and then use it to show the minimizer $u$ must be a constant function.

As in \cite[section 3]{HWY}, we first introduce the weighted total $Q$
curvature functional. If $\left( M^{3},g\right) $ is a smooth compact
Riemannian manifold, and $\chi $ is a positive smooth function on $M$, we
define%
\begin{equation}
I\left( M,g,\chi \right) =\left( \int_{M}\chi d\mu \right) ^{\frac{1}{3}%
}\int_{M}Qd\mu .  \label{eq5.1}
\end{equation}%
For $\widetilde{g}\in \left[ g\right] $, we write $\widetilde{g}=\rho ^{-4}g$%
, then%
\begin{equation}
I\left( M,\widetilde{g},\chi \right) =-2\left( \int_{M}\chi \rho ^{-6}d\mu
\right) ^{\frac{1}{3}}\int_{M}P\rho \cdot \rho d\mu .  \label{eq5.2}
\end{equation}%
The Euler-Lagrange equation of this functional reads as%
\begin{equation}
P\rho =\text{const}\cdot \chi \rho ^{-7}.  \label{eq5.3}
\end{equation}

\begin{lemma}[Kazdan-Warner type condition]
\label{lem5.1}Assume $\left( M^{3},g\right) $ is a smooth compact Riemannian
manifold, and $\chi $ and $\rho $ are positive smooth functions on $M$
satisfying%
\begin{equation}
P\rho =-\chi \rho ^{-7}.  \label{eq5.4}
\end{equation}%
Let $X$ be a conformal vector field on $\left( M,g\right) $, then%
\begin{equation}
\int_{M}X\chi \cdot \rho ^{-6}d\mu =0.  \label{eq5.5}
\end{equation}
\end{lemma}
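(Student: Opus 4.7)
The plan is to exploit the conformal structure of the weighted total $Q$-curvature functional introduced in (5.1)--(5.2). Denote
\[
F(\rho,\chi):=I(M,\rho^{-4}g,\chi)=-2\Bigl(\int_M\chi\rho^{-6}\,d\mu\Bigr)^{1/3}\int_M P\rho\cdot\rho\,d\mu.
\]
The Euler--Lagrange equation of $F(\cdot,\chi)$ is exactly (5.3), so the hypothesis $P\rho=-\chi\rho^{-7}$ says that $\rho$ is a critical point of $F(\cdot,\chi)$; equivalently, $\partial_\rho F|_{(\rho,\chi)}[\dot\rho]=0$ for every smooth variation $\dot\rho$. (Testing the PDE against $\rho$ also supplies the convenient identity $\int_M P\rho\cdot\rho\,d\mu=-\int_M\chi\rho^{-6}\,d\mu$.)

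Next I would construct a one-parameter deformation along which $F$ is \emph{exactly} constant, driven by the flow of $X$. Let $\phi_t$ denote the one-parameter group generated by the conformal vector field $X$, and write $\phi_t^{*}g=\mu_t^{-4}g$ with $\mu_0\equiv 1$. Set
\[
\rho_t:=\mu_t\,(\rho\circ\phi_t),\qquad\chi_t:=\chi\circ\phi_t.
\]
Using the transformation rule (1.3) in the form $P_g(\mu_t\psi)=\mu_t^{-7}P_{\mu_t^{-4}g}\psi$, the volume-form identity $d\mu_{\mu_t^{-4}g}=\mu_t^{-6}\,d\mu_g$, and the naturality of $P$ combined with change of variables under the isometry $\phi_t\colon(M,\phi_t^{*}g)\to(M,g)$, one verifies the two invariances
\[
\int_M P\rho_t\cdot\rho_t\,d\mu_g=\int_M P\rho\cdot\rho\,d\mu_g,\qquad\int_M\chi_t\rho_t^{-6}\,d\mu_g=\int_M\chi\rho^{-6}\,d\mu_g,
\]
whence $F(\rho_t,\chi_t)=F(\rho,\chi)$ for all $t$.

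Finally I would differentiate this constancy at $t=0$. The chain rule yields $0=\partial_\rho F[\dot\rho_0]+\partial_\chi F[X\chi]$ with $\dot\rho_0=\dot\mu_0\,\rho+X\rho$; the first term vanishes by the critical-point observation of the first paragraph, and a direct computation reduces the second to
\[
\partial_\chi F[X\chi]=\tfrac{2}{3}\Bigl(\int_M\chi\rho^{-6}\,d\mu\Bigr)^{1/3}\int_M(X\chi)\,\rho^{-6}\,d\mu,
\]
after substituting $\int_M P\rho\cdot\rho\,d\mu=-\int_M\chi\rho^{-6}\,d\mu$. Since the prefactor is strictly positive, this forces (5.5). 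The one genuinely delicate step is the middle one: the invariance $F(\rho_t,\chi_t)=F(\rho,\chi)$ rests on the correct choice of conformal weight in $\rho_t=\mu_t(\rho\circ\phi_t)$, so that the factors produced by (1.3), by the change of volume form, and by pulling back through $\phi_t$ cancel precisely; once $\rho_t$ is identified, the rest is bookkeeping.
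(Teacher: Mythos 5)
Your argument is correct and is essentially the paper's own proof: both exploit the criticality of $\rho$ for the weighted functional $I(M,\rho^{-4}g,\chi)$ together with its invariance under the flow $\phi_t$ of the conformal field $X$, the paper simply transferring the diffeomorphism onto $\chi$ via $I(M,\phi_t^{*}(\rho^{-4}g),\chi)=I(M,\rho^{-4}g,\chi\circ\phi_{-t})$ so that only the $\chi$-dependence needs differentiating, while you carry the combined path $(\rho_t,\chi_t)$ and kill the $\rho$-term by the explicit first variation. The computations you outline (the weight $\rho_t=\mu_t(\rho\circ\phi_t)$, the two invariances, and the sign of the prefactor via $\int_M P\rho\cdot\rho\,d\mu=-\int_M\chi\rho^{-6}\,d\mu<0$) all check out.
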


\begin{proof}
The proof is exactly the same as for \cite[Lemma 3.1]{HWY}. Indeed let $\phi
_{t}$ be the 1-parameter group generated by $X$, then%
\begin{equation}
\left. \frac{d}{dt}\right\vert _{t=0}I\left( M,\phi _{t}^{\ast }\left( \rho
^{-4}g\right) ,\chi \right) =0.  \label{eq5.6}
\end{equation}%
On the other hand,%
\begin{eqnarray}
I\left( M,\phi _{t}^{\ast }\left( \rho ^{-4}g\right) ,\chi \right)
&=&I\left( M,\rho ^{-4}g,\chi \circ \phi _{-t}\right)  \label{eq5.7} \\
&=&-2\left( \int_{M}\chi \circ \phi _{-t}\rho ^{-6}d\mu \right) ^{\frac{1}{3}%
}\int_{M}P\rho \cdot \rho d\mu .  \notag
\end{eqnarray}%
Since%
\begin{equation}
\int_{M}P\rho \cdot \rho d\mu =-\int_{M}\chi \rho ^{-6}d\mu <0,
\label{eq5.8}
\end{equation}%
we see $\int_{M}X\chi \cdot \rho ^{-6}d\mu =0.$
\end{proof}

\begin{corollary}
\label{cor5.1}Assume $\chi $ and $\rho $ are positive smooth functions on $%
S^{3}$ such that%
\begin{equation}
P\rho =-\chi \rho ^{-7},  \label{eq5.9}
\end{equation}%
then%
\begin{equation}
\int_{S^{3}}\left\langle \nabla \chi \left( x\right) ,\nabla
x_{i}\right\rangle \rho \left( x\right) ^{-6}d\mu \left( x\right) =0
\label{eq5.10}
\end{equation}%
for $i=1,2,3,4$.
\end{corollary}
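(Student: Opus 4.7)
The plan is to apply \lemref{lem5.1} with $M=S^{3}$, taking $X$ to be each of the four conformal vector fields $X_{i}:=\nabla x_{i}$, $i=1,2,3,4$, where $x_{i}$ denotes the restriction of the $i$-th ambient coordinate function on $\mathbb{R}^{4}$ to $S^{3}$. Once we know each $X_{i}$ is conformal, the conclusion is immediate, because for any vector field $X$ on a Riemannian manifold the action on a function is
\begin{equation*}
X\chi =d\chi (X)=\langle \nabla \chi ,X\rangle ,
\end{equation*}
so with $X=X_{i}=\nabla x_{i}$ one has $X_{i}\chi =\langle \nabla \chi ,\nabla x_{i}\rangle $, and \lemref{lem5.1} gives exactly (\ref{eq5.10}).

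Thus the only content is the verification that each $X_{i}$ is a conformal vector field on the standard $S^{3}$. The cleanest way is to exhibit the flow: let $e_{i}$ be the $i$-th standard basis vector of $\mathbb{R}^{4}$ and decompose it along $S^{3}$ as
\begin{equation*}
e_{i}=x_{i}\,x+X_{i}(x),\qquad X_{i}(x)=e_{i}-x_{i}x,
\end{equation*}
which identifies $X_{i}$ with $\nabla x_{i}$ since $x_{i}$ is the first spherical harmonic with $\Delta x_{i}=-3x_{i}$. The family of conformal diffeomorphisms of $S^{3}$ obtained by conjugating translations in the direction $e_{i}$ by stereographic projection has $X_{i}$ as its infinitesimal generator; equivalently, a short computation from $X_{i}(x)=e_{i}-x_{i}x$ yields $\mathcal{L}_{X_{i}}g=-2x_{i}\,g$, so $X_{i}$ is conformal Killing (and not Killing).

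With this in hand, I would simply write: ``By \lemref{lem5.1} applied to the conformal vector field $X_{i}=\nabla x_{i}$ on $\left( S^{3},g_{S^{3}}\right) $, we have
\begin{equation*}
0=\int_{S^{3}}X_{i}\chi \cdot \rho ^{-6}d\mu =\int_{S^{3}}\langle \nabla \chi ,\nabla x_{i}\rangle \rho ^{-6}d\mu ,
\end{equation*}
which is (\ref{eq5.10}).'' The only real point to check carefully is the conformal Killing property of $\nabla x_{i}$; nothing else is an obstacle since Lemma \ref{lem5.1} has already done the substantive work via the invariance of $I(M,\widetilde{g},\chi )$ under pullback by the flow of $X$.
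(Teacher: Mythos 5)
Your proposal is correct and is essentially the argument the paper intends: the corollary is stated as an immediate consequence of Lemma \ref{lem5.1} applied to the conformal vector fields $X_{i}=\nabla x_{i}$ on the standard $S^{3}$, and your verification via $\nabla^{2}x_{i}=-x_{i}g$, i.e. $\mathcal{L}_{X_{i}}g=-2x_{i}g$, is exactly the substantive point. (Only a cosmetic quibble: the flow of $\nabla x_{i}$ corresponds under stereographic projection from $\pm e_{i}$ to dilations rather than translations of $\mathbb{R}^{3}$, but your Lie-derivative computation already settles the conformal Killing property, so nothing is affected.)
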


We are ready to give a perturbation proof of Theorem \ref{thm1.1}.

\begin{proof}[Proof of Theorem \protect\ref{thm1.1}]
For $\varepsilon >0$ small let $u$ be a minimizer of the perturbation
problem (\ref{eq2.1}) with $\left\Vert u^{-1}\right\Vert _{L^{6}}=1$, then%
\begin{equation}
Pu+\varepsilon u=-s_{\varepsilon }u^{-7}.  \label{eq5.11}
\end{equation}%
In another word,%
\begin{equation}
Pu=-\left( s_{\varepsilon }+\varepsilon u^{8}\right) u^{-7}.  \label{eq5.12}
\end{equation}%
It follows from Section \ref{sec4} that $u$ must be radial symmetric and
decreasing with respect to some point on $S^{3}$. By rotation we can assume
that point is the north pole $N$. By Corollary \ref{cor5.1} we have%
\begin{equation}
\int_{S^{3}}\left\langle \nabla \left( s_{\varepsilon }+\varepsilon
u^{8}\right) ,\nabla x_{4}\right\rangle u\left( x\right) ^{-6}d\mu \left(
x\right) =0.  \label{eq5.13}
\end{equation}%
In another word,%
\begin{equation}
\int_{S^{3}}\left\langle \nabla u,\nabla x_{4}\right\rangle u\left( x\right)
d\mu \left( x\right) =0.  \label{eq5.14}
\end{equation}%
Since $\left\langle \nabla u,\nabla x_{4}\right\rangle \geq 0$, we see $%
\left\langle \nabla u,\nabla x_{4}\right\rangle =0$ and $u$ must be constant
function. It follows that%
\begin{equation}
s_{\varepsilon }=\left( \frac{15}{16}-\varepsilon \right) \left\vert
S^{3}\right\vert ^{\frac{4}{3}}.  \label{eq5.15}
\end{equation}%
Hence for any $\varphi \in H^{2}\left( S^{3}\right) $, $\varphi >0$ and $%
\varepsilon >0$ small,%
\begin{equation}
\left\Vert \varphi ^{-1}\right\Vert _{L^{6}}^{2}\left( E\left( \varphi
\right) +\varepsilon \left\Vert \varphi \right\Vert _{L^{2}}^{2}\right) \geq
-\left( \frac{15}{16}-\varepsilon \right) \left\vert S^{3}\right\vert ^{%
\frac{4}{3}}.  \label{eq5.16}
\end{equation}%
Let $\varepsilon \downarrow 0$, we get%
\begin{equation}
\left\Vert \varphi ^{-1}\right\Vert _{L^{6}}^{2}\left( E\left( \varphi
\right) +\varepsilon \left\Vert \varphi \right\Vert _{L^{2}}^{2}\right) \geq
-\frac{15}{16}\left\vert S^{3}\right\vert ^{\frac{4}{3}}.  \label{eq5.17}
\end{equation}
\end{proof}

At last we want to point out that the study of extremal problem (\ref{eq1.14}%
) is motivated by the question of finding conformal metrics with constant $Q$
curvature (see \cite{XY}). One of the crucial ingredient in our approach to (%
\ref{eq1.10}) is the condition NN. The validity of condition NN on the
standard $S^{3}$ is in some sense straightforward and had been observed in 
\cite{HY1}. For general metrics, the understanding of condition NN is more
recent and depends heavily on an identity found in \cite[section 2]{HY2}.
More precisely, if $\left( M,g\right) $ is a smooth compact Riemannian
manifold with $Y\left( g\right) >0$ and $Q>0$, then the following three
statements are equivalent (see \cite[section 4]{HY6}):

\begin{itemize}
\item Extremal problem (\ref{eq1.14}) is achieved;

\item The second eigenvalue of Paneitz operator $\lambda _{2}\left( P\right)
>0$;

\item The Paneitz operator $P$ satisfies the condition NN.
\end{itemize}

On the other hand, one can solve the constant $Q$ curvature problem without
the solution to extremal problem (\ref{eq1.14}) (see \cite{HY3,HY5}). It is
still not known whether we can find a smooth compact Riemannian manifold $%
\left( M^{3},g\right) $ with $Y\left( g\right) >0$, $Q>0$ and the Paneitz
operator has two or more negative eigenvalues. For such kind of metrics, the
value of (\ref{eq1.14}) would be $-\infty $ and the Paneitz operator does
not satisfy condition NN.


\begin{thebibliography}{CGY}
\bibitem[BT]{BT} A. Baernstein II and B. A. Taylor. S\textit{pherical
rearrangements, sub-harmonic functions and }$\ast $\textit{-functions in }$n$%
\textit{-space.} Duke Math. J. \textbf{43} (1976), 245--268.

\bibitem[Br]{Br} T. Branson. \textit{Differential operators canonically
associated to a conformal structure}. Math. Scand. \textbf{57} (1985), no.
2, 293--345.

\bibitem[CGY]{CGY} S.-Y. A. Chang, M. J. Gursky and P. C. Yang. \textit{An
equation of Monge-Ampere type in conformal geometry, and four-manifolds of
positive Ricci curvature}. Ann. of Math. (2) \textbf{155} (2002), 709--787.

\bibitem[CX]{CX} Y. S. Choi and X. W. Xu. \textit{Nonlinear biharmonic
equations with negative exponents.} J. Differential Equations \textbf{246}
(2009), no. 1, 216--234.

\bibitem[GNN]{GNN} B. Gidas, W.M. Ni and L. Nirenberg. \textit{Symmetry and
related properties via the maximum principle.} Comm. Math. Physics \textbf{68%
} (1979), 203--243.

\bibitem[H]{H} F. B. Hang. \textit{On the higher order conformal covariant
operators on the sphere.} Commun Contemp Math. \textbf{9} (2007), no. 3,
279--299.

\bibitem[HY1]{HY1} F. B. Hang and P. C. Yang. \textit{The Sobolev inequality
for Paneitz operator on three manifolds.} Calculus of Variations and PDE. 
\textbf{21} (2004), 57--83.

\bibitem[HY2]{HY2} F. B. Hang and P. C. Yang. \textit{Sign of Green's
function of Paneitz operators and the }$Q$\textit{\ curvature}. Int. Math.
Res. Not. IMRN \textbf{2015}, no. 19, 9775--9791.

\bibitem[HY3]{HY3} F. B. Hang and P. C. Yang. $Q$\textit{\ curvature on a
class of }$3$\textit{-manifolds.} Comm Pure Appl Math, \textbf{69} (2016),
no. 4, 734--744.

\bibitem[HY4]{HY4} F. B. Hang and P. C. Yang. $Q$\textit{-curvature on a
class of manifolds with dimension at least }$5$\textit{.} Comm Pure Appl
Math, \textbf{69} (2016), no. 8, 1452--1491.

\bibitem[HY5]{HY5} F. B. Hang and P. C. Yang. \textit{Lectures on the
fourth-order }$Q$\textit{\ curvature equation.} Lect. Notes Ser. Inst. Math.
Sci. Natl. Univ. Singap., \textbf{31} (2016), 1--33.

\bibitem[HY6]{HY6} F. B. Hang and P. C. Yang. \textit{Paneitz operator for
metrics near }$S^{3}$. Calc. Var. Partial Differential Equations \textbf{56}
(2017), no. 4, Art. 106, 26 pp.

\bibitem[HWY]{HWY} F. B. Hang, X. D. Wang and X. D. Yan. \textit{An integral
equation in conformal geometry.} Ann. Inst. H. Poincare Anal. Non Lineaire 
\textbf{26} (2009), no. 1, 1--21.

\bibitem[P]{P} S. M. Paneitz. \textit{A quartic conformally covariant
differential operator for arbitrary pseudo-Riemannian manifolds}. SIGMA
Symmetry Integrability Geom. Methods Appl. \textbf{4} (2008), Paper 036, 3
pp.

\bibitem[PS]{PS} G. Polya and G. Szego. Isoperimetric Inequalities in
Mathematical Physics. Annals of Mathematics Studies, no. 27, Princeton
University Press, Princeton, NJ, 1951.

\bibitem[R]{R} F. Robert. Fourth order equations with critical growth in
Riemannian geometry. Unpublished notes. Available at
http://www.iecn.u-nancy.fr/\symbol{126}frobert/LectRobertFourth.pdf.

\bibitem[SY]{SY} R. Schoen and S. T. Yau. Lectures on differential geometry.
Conference Proceedings and Lecture Notes in Geometry and Topology, I.
International Press, Cambridge, MA, 1994.

\bibitem[YZ]{YZ} P. C. Yang and M. J. Zhu. \textit{On the Paneitz energy on
standard three sphere}. ESAIM: Control, Optimization and Calculus of
Variations. \textbf{10 }(2004), 211--223.

\bibitem[XY]{XY} X. W. Xu and P. C. Yang. \textit{On a fourth order equation
in 3-D}. ESAIM: control, optimisation and calculus of variations. \textbf{8}
(2002), 1029--1042.
\end{thebibliography}
\end{document}